\documentclass[10pt, reqno]{amsart}

\usepackage{amsmath}
\usepackage{amsfonts}
\usepackage{amssymb}
\usepackage{xcolor}
\usepackage{soul}
\usepackage{amssymb,latexsym,amsmath,amsfonts}
\usepackage{hyperref}
\usepackage{amsthm}
\usepackage{dsfont}
\usepackage[mathscr]{eucal}
\usepackage[english]{babel}
\usepackage{lipsum}
\usepackage{setspace}
\usepackage{mathtools}
\usepackage{tikz}
\usepackage{pgfplots}
\usetikzlibrary{calc,angles,positioning,intersections,quotes,decorations}

\pgfplotsset{compat=1.18}
\theoremstyle{definition}
\newtheorem*{defn}{Definition}
\newtheorem{theorem}{Theorem}[section]
\newtheorem{lemma}{ \bf Lemma}[section]
\newtheorem{proposition}{\bf Proposition}[section]
\newtheorem{corollary}{\bf Corollary}[section]

\theoremstyle{remark}
\newtheorem{remark}{Remark}[section]
\newcommand{\be}{\begin{equation}}
\newcommand{\ee}{\end{equation}}
\newcommand{\Bea}{\begin{eqnarray*}}
\newcommand{\Eea}{\end{eqnarray*}}
\newcommand{\bea}{\begin{eqnarray}}
\newcommand{\eea}{\end{eqnarray}}

\numberwithin{equation}{section}

\title{Existence of exhaustions with controlled sign of mean curvature in noncompact manifolds}
\author{Anushree Das}
\date{October 2026}

\address{Beijing International Center for Mathematical Research, Peking University, Beijing, 100871, P. R. China}
\email{anushree@bicmr.pku.edu.cn}

\subjclass{Primary 53C21; Secondary 53C42, 49QQ20, 53CC23.}

\begin{document}

\begin{abstract}
Given a complete and non-compact Riemannian manifold of dimension $3\le n\le 7$, we find conditions for the existence of compact exhaustions $\{U_i\}_{i=1}^\infty$ such that all boundaries $\partial U_i$ have the same sign of mean curvature. In particular, we show that in these dimensions, a manifold with one end satisfying a no tentacle condition always has a weakly mean convex, weakly mean concave, or minimal exhaustion, according to the behaviour of the area profile of separating hypersurfaces. This is a version of a result sketched by Gromov, under a condition which is weaker than his thickness at infinity. In more general situations, we study the existence of exhaustions which are weakly mean concave or weakly mean convex apart from an arbitrarily small $L^1$ error. That is, we find sufficient conditions for the existence of exhaustions $\{U_i\}_{i=1}^\infty$ which satisfy $\int_{\partial U_i}H^+\to0$, $\int_{\partial U_i}H^-\to0$, or $\int_{\partial U_i}|H|\to0$, where $H^-$, $H^+$, and $|H|$ are the negative part, positive part, and the absolute value of the mean curvature respectively. As an application of our methods, we study the existence of almost $c$-convex exhaustions, that is, exhaustions whose boundaries satisfy $\int_{\partial U_i}(H-c)^-\to0$ for some $c>0$.
\end{abstract}

\maketitle

\section{Introduction}

For an open Riemannian manifold $M$ with a metric $g$, we say that a nested sequence of submanifolds $\{U_i\}_{i=1}^\infty$ is a compact exhaustion of $M$ if $U_i \subset \mathring U_{i+1}$, $M = \bigcup^\infty_{i=1}U_i$, and each $U_i$ is a compact codimension-$0$ smooth submanifold with smooth boundary $\partial U_i$. Here, $\mathring U$ denotes the interior of a set $U$. If each $\partial U_i$ is a minimal hypersurface, we say that $\{U_i\}$ is an exhaustion with minimal hypersurface boundaries. Similarly, if each $\partial U_i$ is (weakly) mean convex or mean concave, we say that $\{U_i\}$ is an exhaustion with (weakly) mean convex or mean concave boundaries, respectively.

Throughout this article, the mean curvature $H_X$ of a hypersurface $X$ bounding a domain is computed with respect to the outward normal. When there is no ambiguity about the hypersurface, we denote the mean curvature simply by $H$. We write $H^\pm=\max\{\pm H,0\}$.

The question of when an open Riemannian manifold admits an exhaustion with mean convex boundaries has been a longstanding open problem in geometry, for instance mentioned by Lawson in \cite{book}. Hadamard manifolds and manifolds with non-negative sectional curvature both admit exhaustions with mean convex boundaries, as do asymptotically flat manifolds. Gromov's trichotomy result in \cite{Gromov} (Section $1.4$) shows that the absence of any mean convex exhaustion guarantees the presence of certain minimal hypersurfaces in the manifold. One way to get weakly mean convex exhaustions is via the long time existence of the inverse mean curvature flow, for instance, following Xu in \cite{Xu}. The consequences of the existence of a mean convex exhaustion have also been widely studied, for instance in \cite{FM}. On the other hand, it is well known that when all the ends of an open manifold are cusps, the manifold admits an exhaustion with mean concave boundaries. Recently, in \cite{BZ}, Bi and Zhu showed that a non-compact manifold with sublinear volume growth always admits a mean concave exhaustion. Their methods rely on $\mu$-bubbles and on the existence of a minimiser for a variational problem. Our ideas shall be closely related to a similar method.

A manifold of dimension $n$ is said to be \textit{thick at infinity} if it contains no non-compact minimal hypersurface with compact boundary and with finite $n-1$ volume. In \cite{Gromov}, Gromov sketched a proof of the following result.

\begin{theorem}
    Let $X$ be a Riemannian manifold possibly with compact boundary $\partial X$ and a single end such that $X$ is thick at infinity. Then one of the following possibilities is realised.
    \begin{enumerate}
        \item $X$ can be exhausted by compact strictly mean convex bubbles,
        \item $X$ can be exhausted by compact strictly mean concave bubbles,
        \item there exists a continuous positive proper function $h:X\to \mathbb{R}^+$ such that the levels $Y_t=h^{-1}(t)\subset X$ are minimal hypersurfaces, that are the boundaries of $0$-bubbles, for all $t\geq t_0\geq 0$.
    \end{enumerate}
\end{theorem}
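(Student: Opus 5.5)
The plan is a variational argument with $\mu$-bubbles: compare the areas of separating hypersurfaces near the end, and use minimisers of a prescribed-mean-curvature functional with a small constant $c$. The sign of $c$ is dictated by the trichotomy. Fix a compact core $K_0\subset X$ containing $\partial X$. Since $X$ has a single end, for each compact $K\supset K_0$ there are hypersurfaces in $X\setminus K$ separating $K$ from infinity. Define the area profile
\[
A(K)=\inf\{\mathcal H^{n-1}(\Sigma): \Sigma\subset X\setminus K \text{ separates } K \text{ from the end}\},
\]
which is nondecreasing in $K$. Let $A_\infty=\lim A(K_i)\in(0,\infty]$ along an exhaustion $K_i$. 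I would split into cases according to how $A$ behaves near infinity.

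\emph{Case $A_\infty=\infty$.} Here one expects mean convex exhaustions. For a large compact $K_i$ and a small $c>0$, consider the functional
\[
\mathcal A_c(\Omega)=\mathcal H^{n-1}(\partial\Omega)-c\,\mathrm{Vol}(\Omega)
\]
over Caccioppoli sets $\Omega$ with $K_i\subset\Omega\subset K_{j}$, with $j$ chosen large. The boundary $\partial K_j$ acts as a barrier: because $A(K_j)\gg A(K_i)$, enclosing more than $K_i$ costs little, while minimisers touching $\partial K_j$ cost at least $A(K_j)$. Thus one can choose $c$ small relative to $\mathrm{Vol}(K_j)$ and obtain a minimiser strictly between $K_i$ and $K_j$. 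Regularity in dimensions $\le 7$ makes its boundary a smooth $c$-bubble with $H=c>0$, so it is strictly mean convex. The obstacle at $\partial K_i$ must also be avoided, and this is where thickness is used. If minimisers kept sticking to an inner obstacle, or escaping to infinity as $j\to\infty$, one would extract a limit minimal hypersurface with compact boundary and finite area, noncompact by properness. Thickness at infinity forbids this.

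\emph{Case $A_\infty<\infty$ with $A$ eventually strictly... } Here I would instead minimise $\mathcal H^{n-1}(\partial\Omega)+c\,\mathrm{Vol}(\Omega)$ among sets containing $K_i$. These are mean concave $(-c)$-bubbles, and the finiteness of $A_\infty$ guarantees that a minimiser exists inside a compact region, again using thickness to prevent escape. Finally, if neither strict option is available for any $c>0$, I would take $c=0$ and produce $0$-bubbles $Y$, which are outward area-minimising and separating, enclosing every compact set. I would order them by inclusion, using the fact that minimisers can be chosen nested (the cut-and-paste argument: $\Omega_1\cap\Omega_2$ and $\Omega_1\cup\Omega_2$ remain minimisers). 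Define $h(x)$ via the area, or via a parameter $t$ indexing a continuous nested family. Continuity of $h$, meaning no gaps in the family, follows because a gap between two $0$-bubbles would allow a strictly mean convex or strictly mean concave perturbation, returning us to one of the first two cases.

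The main obstacle is the compactness step: showing that minimisers neither touch the obstacles nor drift off to infinity. I would try a contradiction argument. Take a sequence of minimisers with $c_k\to0$ whose boundaries leave every compact set, or whose areas stay bounded. Local area bounds together with Schoen--Simon curvature estimates give subsequential convergence to a stable minimal hypersurface with compact boundary (lying on $\partial K_i$) and finite area. This hypersurface is noncompact, which contradicts thickness. I also expect difficulty in proving continuity of $h$ in the third case. For it I would use a foliation-style argument and the strong maximum principle for touching $0$-bubbles.
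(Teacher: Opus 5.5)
The paper does not prove this statement. It only quotes Gromov's sketch and instead proves Theorem \ref{theoremtrichotomy}, for $3\le n\le 7$, under condition $NT$ (which thickness implies). Its conclusions are weaker: boundary components are strictly mean convex/concave \emph{or minimal}, and case (3) only gives an exhaustion by stable minimal hypersurfaces, not a proper function all of whose levels are minimal. Measured against that argument, your proposal has a genuine gap already in the case split. The sign is not decided by whether $A_\infty$ is finite. It is decided by whether $A(K)<A_\infty$ for all $K$ (convex), or $A(K_0)=A_\infty$ with $A(K)$ never attained for $K\supseteq K_0$ (concave), or attained for arbitrarily large $K$ (minimal). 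For example, take an end $dt^2+f(t)^2g_{S^{n-1}}$ with $f$ strictly increasing to a positive limit and $f',f''\to0$. It has bounded geometry, hence is thick at infinity, and $A_\infty<\infty$. Yet the $1$-Lipschitz radial projection onto $S_{t_i}$ shows that $K_i=\{t\le t_i\}$ itself minimises $|\partial\Omega|+c|\Omega|$ among $\Omega\supseteq K_i$, while $H_{\partial K_i}=(n-1)f'/f>0$; indeed no mean concave exhaustion exists there. You also misplace where the work lies. For $|\partial\Omega|-c|\Omega|$, contact with the inner wall is harmless for the sign ($H\ge c$ there) and the outer wall is the issue. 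For $|\partial\Omega|+c|\Omega|$, outer contact is harmless ($H\le -c$) and the inner wall is the issue, since there only $H\ge -c$ holds. Minimisers sticking to $\partial K_i$ can have a bounded limit, as in the example, and thickness says nothing about such a limit. In Proposition \ref{prop:flat} inner contact is ruled out precisely by non-attainment of $A(K)$. That is the hypothesis missing from your unfinished heading \emph{with $A$ eventually strictly\dots}.

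In the convex case, the claim that minimisers touching $\partial K_j$ cost at least $A(K_j)$ is false. Meeting $\partial K_j$ somewhere does not make $\partial\Omega$ separate $K_j$ from the end; this is exactly the tentacle phenomenon. Propositions \ref{propAinftyinfty} and \ref{prop:strict} instead take $c=0$ and argue by compactness. Suppose every band minimiser touches $\partial V_i$. Either they leave every compact set, so they eventually enclose any $B_r$ and have area at least $A(B_r)$, which exceeds $A(K)$ for large $r$ although their areas tend to $A(K)$; thus escape is excluded by the profile, not by thickness. Or they converge locally to a set minimising area among compact perturbations containing $K$, whose boundary is either unbounded (excluded by thickness, via $NT$), bounded and separating (then itself a band minimiser avoiding the outer wall), or bounded and non-separating (fill it in). With $c>0$ the bounded separating alternative no longer gives a contradiction for free. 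Volume can be lost in the limit, and the limit need not minimise $|\partial\Omega|-c|\Omega|$ in the bands. The paper, working with $c=0$, only obtains components that are strictly mean convex or minimal.

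Finally, case (3) is not established. You first need $0$-bubbles enclosing every compact set. In the paper this is exactly the attained-profile hypothesis of Proposition \ref{prop:attained}, and you give no argument deriving it from the failure of the strict cases. The no-gap claim behind the continuity of $h$ is only asserted. A strictly mean convex or concave hypersurface in one gap is neither automatically a bubble nor an exhaustion. You would need such bubbles of one fixed sign in gaps tending to infinity, and their existence next to degenerate stable $0$-bubbles is unclear. You also assert $A_\infty>0$ without proof (the paper treats $A_\infty=0$ separately, in Proposition \ref{propAinfty0}). And your regularity step needs $n\le7$, which the statement does not assume.
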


We prove a version of Gromov's result under slightly weaker assumptions, in terms of the area profile of separating hypersurfaces near the end. In particular, we give conditions for the existence of mean convex, mean concave, and minimal exhaustions in terms of the area profile. 

Fix a point $p\in M$, and write $B_r=B(p,r)$ and $d=d(p,\cdot)$. For a compact set $K\subset M$, let
$$
A(K)=\inf\bigl\{|\partial\Omega|:\ \Omega\ \text{is a compact domain with smooth boundary and}\ K\subset\mathring\Omega\bigr\}.
$$
Also, we denote $A_\infty=\lim_{r\to\infty}A(B_r)$. Then $A(K)$ is non-decreasing in $K$, and $A(K)\le A_\infty$.

\begin{defn}
Let $(M,g)$ be a complete non-compact Riemannian manifold, and let $K\subset M$ be compact. A \textit{tentacle for} $K$ is defined as a set $\Omega\supset K$ of locally finite perimeter which is the closure of an open set, satisfying the following properties:
\begin{enumerate}
    \item $\partial \Omega$ is unbounded;
    \item $|\partial \Omega|\leq A_\infty$ and $|\partial \Omega|<\infty$;
    \item $ \Omega$ is perimeter minimising amongst all compact perturbations containing $K$. That is, for any $\Omega'\supset K$ with $\Omega'\Delta \Omega$ bounded, $|\partial \Omega|\leq |\partial \Omega'|$.
\end{enumerate}

Here $|\partial \Omega|$ denotes the perimeter of $\Omega$. The manifold $(M,g)$ is said to satisfy the \textit{no tentacle} condition (henceforth called the $NT$ condition) if there exists a compact set $K_0$ such that for any compact set $K\supset K_0$, $K$ has no tentacle.
\end{defn}

The $NT$ condition rules out manifolds where the minimum area hypersurfaces bounding any domain containing some compact set are given by an unbounded embedded hypersurface of small area going into the end, and is a natural condition arising in many geometric scenarios. Thickness at infinity implies $NT$ condition, however the converse is not true. This is because the $NT$ condition only rules out those hypersurfaces which are obtained as minimisers. For details, we refer to the example at the end of Section \ref{sectiontrichotomy}. In particular, if a manifold has bounded geometry (the injectivity radius is bounded below by a constant $\iota>0$, and the absolute value of the sectional curvature is bounded above by a constant $\kappa_0$), then the manifold satisfies the $NT$ condition.

In the rest of the paper, we shall assume that when the manifold satisfies the $NT$ condition, the compact sets we consider are large enough to have no tentacle (that is, we always consider compact sets $K$ such that $K\supset K_0$). If a manifold with a single end satisfies the $NT$ condition, our first result gives conditions for the existence of a (weak) mean convex, a mean concave, or a minimal boundary exhaustion.

\begin{theorem}\label{theoremtrichotomy}
Let $(M,g)$ be a complete Riemannian manifold of dimension $3\le n\le 7$ which has a single end and satisfies condition $NT$. Then exactly one of the following three cases occurs. 
\begin{enumerate}
\item[(1)] \emph{Strict profile:} $A(K)<A_\infty$ for every compact set $K\subset M$. Then $M$ has a compact exhaustion $\{U_i\}_{i=1}^\infty$ with smooth boundaries such that each component of $\partial U_i$ is either strictly mean convex or minimal.
\item[(2)] \emph{Flat profile:} there is a compact set $K_0$ with $A(K_0)=A_\infty$, and $A(K)$ is not attained for any compact set $K\supseteq K_0$. Then $M$ has a compact exhaustion $\{U_i\}_{i=1}^\infty$ with smooth boundaries such that each component of $\partial U_i$ is either strictly mean concave or minimal.
\item[(3)] \emph{Attained profile:} there is a compact set $K_0$ with $A(K_0)=A_\infty$, and $A(K)$ is attained for arbitrarily large compact sets $K$. Then $M$ has a compact exhaustion $\{U_i\}_{i=1}^\infty$ such that each $\partial U_i$ is a smooth stable minimal hypersurface.
\end{enumerate}
The conclusion in case (3) holds without condition $NT$ and without assuming a single end. If $A_\infty=0$, then case (2) occurs. In that case the boundaries $\partial U_i$ can be taken to be strictly mean concave, again without condition $NT$ or a single end.
\end{theorem}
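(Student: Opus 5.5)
The plan is to build the exhaustion in each case from solutions of the minimisation problem defining $A(K)$, taken over an increasing sequence of compact sets $K_i$ (say $K_i=\overline{B_{r_i}}$ with $r_i\to\infty$). The three cases are mutually exclusive and exhaustive by definition. Either $A(K)<A_\infty$ for all $K$, or some $K_0$ has $A(K_0)=A_\infty$. In the latter case monotonicity gives $A(K)=A_\infty$ for all $K\supseteq K_0$, and then attainment either fails for all such $K$ or holds for arbitrarily large ones. (If attainment holds for some large $K$ but not for larger ones, we pass to the tail. I would check that "not attained for any $K\supseteq K_0$" is meant after possibly enlarging $K_0$.)

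The basic tool is an obstacle problem. We minimise perimeter among finite perimeter sets $\Omega\supseteq K$, with $\Omega$ compact. Take a minimising sequence $\Omega_j$. The NT condition and the single end prevent the minimising sequence from escaping: a subsequential limit in $L^1_{loc}$ either is compact, or its reduced boundary is unbounded. In the latter case it has perimeter $\le A(K)\le A_\infty$ and is locally minimising outside $K$, hence is a tentacle, contradicting NT. The single end is needed so that mass cannot split into a compact minimiser plus pieces going off to infinity in different ends. Perimeter can also be lost at infinity, and this is where the case distinction enters.

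In case (1), pick $K'$ with $A(K')<A_\infty$. Any part of a minimising sequence escaping to infinity would cost at least about $A_\infty-A(K)$ in area. I expect this to be the main obstacle: making this "no loss at infinity" estimate precise via a concentration-compactness argument with cutoff at large spheres, comparing against $A(B_r)\to A_\infty$. This yields a compact minimiser $\Omega\supset K$. By regularity of obstacle problems in dimension $n\le 7$, $\partial\Omega$ is $C^{1,1}$, and it is smooth minimal away from $K$. Where it touches $K$ we only know $H\ge 0$ in a weak sense. To get strict mean convexity I would replace $K$ by a slightly larger smooth compact set, use a small perturbation of the metric-weighted functional (a $\mu$-bubble with tiny positive $h$ as in Bi--Zhu), or push the boundary inward along a mean curvature flow for short time. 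This gives components that are either strictly mean convex or minimal. Iterating with $K_{i+1}\supset U_i$ gives the nested exhaustion.

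In case (2), $A(K)=A_\infty$ is not attained, so minimisers do not exist. Instead minimise $|\partial\Omega|-\varepsilon|\Omega|$-type functionals, or minimise over $\Omega$ with $K\subset\Omega\subset \overline{B_R}$, an obstacle problem between two obstacles. The outer minimiser must touch $\partial B_R$, since otherwise it would attain $A_\infty$. There its mean curvature is $\le0$ from the outside obstacle, and elsewhere it is minimal. If it touched the inner obstacle only, it would attain $A(K)$. A perturbation then makes the touching components strictly mean concave.

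In case (3), the minimisers $\Omega_K$ attaining $A_\infty$ for large $K$ do not touch $K$. Otherwise, enlarging the set slightly and using $A(K')=A(K)$ shows they are free minimisers, hence smooth stable minimal hypersurfaces. Here we use $n\le7$; compactness holds by assumption of attainment, so neither NT nor a single end is needed. Choosing $K_{i+1}\supset\Omega_{K_i}$ gives the exhaustion.

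If $A_\infty=0$, then case (2) occurs, because $A(K)>0$ for nonempty $K$ with interior. In that case the large-sphere obstacle argument gives boundaries of small area that are mean concave. To make them strictly mean concave, I would use a $\mu$-bubble with a small negative prescribed mean curvature function, whose minimiser exists since the relevant areas tend to $0$; this argument uses neither NT nor a single end.
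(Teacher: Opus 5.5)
Your framework matches the paper's: obstacle problems, $NT$ to exclude tentacles, short mean curvature flow to smooth $C^{1,1}$ contact sets, and iteration. Case (1) goes through, and is easier than you fear. A bounded limit of a minimising sequence is itself a competitor, so it loses no perimeter. Total escape (limit $=M$ after filling in, using the single end) forces $A(K)\ge A(B_r)$ for every $r$, contradicting $A(K)<A_\infty$.

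The genuine gap is in case (2). There everything hinges on showing that, for $R$ large, the two-obstacle minimiser $\xi_R$ between $\partial K$ and $\partial B_R$ does \emph{not} touch the inner obstacle $\partial K$, where the first variation only gives $H\ge0$. Your reasons do not establish this. ``Otherwise it would attain $A_\infty$'' and ``if it touched the inner obstacle only, it would attain $A(K)$'' are both false. A band minimiser that avoids $\partial B_R$ is only minimising within that band, with area $a_K(B_R)\ge A(K)$; it need not be a global minimiser. More seriously, you never address contact with \emph{both} obstacles for every $R$. That is precisely the scenario $NT$ is needed to exclude, and your case (2) argument never uses $NT$ at all.

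The missing step is the limit argument the paper uses. Suppose $\xi_{R_j}$ touches $\partial K$ along a sequence $R_j\to\infty$. The limit $\xi_\infty=\partial\Omega_\infty$ still touches $\partial K$, by uniform regularity and density estimates at the inner obstacle, so this is not the total-escape case $\Omega_\infty=M$. It also satisfies $|\xi_\infty|\le A(K)=A_\infty<\infty$, and $\Omega_\infty$ minimises perimeter among compact perturbations containing $K$. Now rule out the three possibilities:
\begin{enumerate}
\item if $\xi_\infty$ is unbounded, $\Omega_\infty$ is a tentacle for $K$, contradicting $NT$;
\item if it is bounded and separates $K$ from the end, $A(K)$ is attained, contradicting the flat-profile hypothesis;
\item if it is bounded and does not separate, the single end lets you fill in the enclosed compact set, contradicting minimality.
\end{enumerate}
Only then do you know that for large $R$ the minimiser has $H=0$ on its free part and $H\le0$ on its (possibly empty) outer contact set. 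The short flow then turns this into components that are strictly mean concave or minimal.

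Two smaller points. First, your alternative functional $|\partial\Omega|-\varepsilon|\Omega|$ has free boundary with $H=+\varepsilon$, so it produces mean \emph{convex} boundaries. The correct-sign functional $|\partial\Omega|+\varepsilon|\Omega|$ would still face the same inner-contact problem. Second, in case (3) the set attaining $A(K)$ may well touch $K$. What is true is that if $K$ contains a neighbourhood of $K_0$, then the minimiser also attains $A(K_0)=A_\infty$ and its boundary stays a positive distance from $K_0$. That is what makes it a free minimiser.
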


The exhaustion can be chosen such that the boundary hypersurfaces are bubbles. In particular, if the manifold has uniformly positive scalar curvature, then the exhaustion can further be chosen such that the boundary hypersurfaces also admit some positive scalar curvature metric, for instance by a suitable $\mu$-bubble argument. A manifold may admit different exhaustions with opposite signs of mean curvatures on their boundaries. However, the presence of one weakly mean concave exhaustion and one weakly mean convex exhaustion implies the presence of a minimal exhaustion (for instance, by finding a minimal hypersurface in a band bounded by a mean concave and mean convex hypersurface). Hence, if more than one of above kinds of exhaustions exist in the same manifold, then case $(3)$ must hold true.

Note that the failure of the above mentioned trichotomy implies that all end-separating hypersurfaces far enough towards the end have mixed sign of mean curvature. If the manifold has multiple ends, such that each end satisfies an equivalent $NT$ condition, then the trichotomy holds separately for each end. In particular, we cannot guarantee that the sign of the exhaustion in each end remains the same. The standard example for this situation is a manifold with one cusp like end and one funnel like end, where the boundary of an exhaustion is mean concave near the cusp, and mean convex near the funnel.

Since manifolds with bounded geometry satisfy the $NT$ condition, we have the following corollary.

\begin{corollary}\label{cor:boundedgeometry}
    Let $(M,g)$ be a complete one-ended Riemannian manifold of dimension $3\leq n \leq 7$, such that it has bounded geometry. Then, $M$ has a compact exhaustion such that all boundaries of the exhaustion are weakly mean convex hypersurfaces, all boundaries are weakly mean concave hypersurfaces, or all boundaries are minimal hypersurfaces.
\end{corollary}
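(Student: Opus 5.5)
The corollary will follow from Theorem \ref{theoremtrichotomy}, once we check that bounded geometry implies the $NT$ condition. Given that, the three cases of the theorem exhaust all possibilities, and each gives one of the three kinds of exhaustion. In case (1) every component of $\partial U_i$ is strictly mean convex or minimal, hence weakly mean convex. In case (2) every component is strictly mean concave or minimal, hence weakly mean concave. In case (3) every boundary is a stable minimal hypersurface. It is worth noting briefly why the cases are exhaustive. Either $A(K)<A_\infty$ for all compact $K$, or some $K_0$ has $A(K_0)=A_\infty$. In the second situation, since $A$ is monotone, $A(K)=A_\infty$ for all $K\supseteq K_0$, and attainment for such $K$ either fails for all of them or holds for arbitrarily large ones. (If it holds for some $K$ but fails for all larger ones, enlarge $K_0$ so that case (2) applies.)

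The substantive step is verifying $NT$. Assume $|\sec|\le\kappa_0$ and $\mathrm{inj}\ge\iota$, and suppose that for some compact $K$ there is a tentacle $\Omega$. Then $\partial\Omega$ has finite perimeter but is unbounded, and $\Omega$ minimises perimeter among compact perturbations containing $K$. Away from $K$, $\Omega$ is therefore locally perimeter minimising. Since $3\le n\le7$, regularity theory makes $\partial\Omega\setminus K$ a smooth stable minimal hypersurface. Pick points $x_j\in\partial\Omega$ with $d(x_j,K)\to\infty$, and take them pairwise at distance at least $2r_0$, where $r_0<\iota$ is chosen small relative to $\kappa_0$ and $\iota$. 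The monotonicity formula holds in geodesic balls of radius $r_0$ with constants depending only on $\kappa_0$ and $n$, and $x_j$ lies in the support of a perimeter minimiser. Together these give $|\partial\Omega\cap B(x_j,r_0)|\ge c(n,\kappa_0,\iota)\,r_0^{n-1}>0$. Because the balls are disjoint, summing over $j$ gives $|\partial\Omega|=\infty$, contradicting property (2) of a tentacle. Hence no compact set has a tentacle, so $NT$ holds with any $K_0$.

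The main obstacle is making the density lower bound uniform. One needs almost-Euclidean comparison in normal coordinates on balls of radius below $\iota$, or alternatively a uniform isoperimetric/relative-isoperimetric inequality in small balls, which bounded geometry supplies. I would argue through the minimising property directly. Let $V(r)=|\Omega\cap B(x_j,r)|$. Comparing $\Omega$ with $\Omega\setminus B(x_j,r)$ and with $\Omega\cup B(x_j,r)$, both admissible once $B(x_j,r)\cap K=\emptyset$, gives $|\partial\Omega\cap B(x_j,r)|\le \mathcal H^{n-1}(\partial B(x_j,r)\cap\Omega)$. Combining this with the uniform local isoperimetric inequality yields a differential inequality for $V(r)$, and hence for its complement as well. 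Integrating it gives $\min\{V(r),|B(x_j,r)\setminus\Omega|\}\ge c r^n$, and the relative isoperimetric inequality then gives the perimeter bound. Both the comparison and the isoperimetric constants depend only on $n,\kappa_0,\iota$, which completes the argument.
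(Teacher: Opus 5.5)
Your proposal is correct and follows the paper's proof. You reduce to Theorem~\ref{theoremtrichotomy}, then verify $NT$: the boundary of a putative tentacle is a smooth minimal hypersurface away from $K$, and the uniform density bound from bounded geometry on infinitely many disjoint balls $B(x_j,r_0)$ forces $|\partial\Omega|=\infty$. Your alternative route to the density bound is also valid and sidesteps regularity: it compares $\Omega$ with $\Omega\setminus B$ and $\Omega\cup B$ and uses uniform (relative) isoperimetric inequalities, the same mechanism the paper uses in Proposition~\ref{prop:noncollapsed}.
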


Cheeger and Gromov showed, in \cite{CG}, that a non-compact manifold of finite volume with the absolute value of sectional curvature bounded above admits an exhaustion $\{U_i\}_{i=1}^\infty$ such that the volume of $\partial U_i\to 0$ as $i\to \infty$, and the second fundamental form of $\partial U_i$ are uniformly bounded. In particular, the total mean curvature of the boundaries, which is defined as $\int_{\partial U_i}|H_{\partial U_i}|$, tends to $0$ as $i\to \infty$.

In the absence of the $NT$ condition, we investigate the existence of exhaustions where the total mean curvature tends to $0$. More generally, we also find some conditions for the existence of an exhaustion where the integral of the negative mean curvature (resp. positive mean curvature) of the boundary tends to $0$, that is, an exhaustion which has almost weakly mean convex (resp. weakly mean concave) boundary with arbitrarily small $L^1$ error. We will need the following constant, which measures how fast the small geodesic balls around a point collapse as we go out to the end of the manifold.

\begin{defn}
For a Riemannian manifold $(M,g)$ and a point $q$ in $M$, let $\tilde r(q)$ be the largest radius $r\le1$ such that $B(q,\tilde r(q))$ admits a coordinate chart in which $\frac14\delta_{ij}\le g_{ij}\le4\delta_{ij}$. We define the \textit{exponential rate of collapse} of $M$ as
\begin{align}
\kappa=\limsup_{d(q)\to\infty}\frac{\log(1/\tilde r(q))}{d(q)}\in[0,\infty]
\end{align}
If $\tilde r(q)\ge r_0>0$ for all $q\in M$, then $\kappa=0$, and we say that $M$ is non-collapsed.
\end{defn}

This holds, for instance, under a two-sided bound on the sectional curvature and a lower bound on the injectivity radius. Manifolds which are non-collapsed satisfy the $NT$ condition.
The main result of this approach is the following.

\begin{theorem}\label{cor:L1}
Let $(M,g)$ be a complete Riemannian manifold of dimension $3\le n\le7$.
\begin{enumerate}
\item If $A_\infty<\infty$, there is a compact exhaustion $\{U_i\}$ of $M$ with smooth boundaries such that $\int_{\partial U_i}H^+\to0$.
\item If $M$ has a single end, $A_\infty>0$ and $\kappa<\infty$, there is a compact exhaustion $\{U_i\}$ of $M$ with smooth boundaries such that $\int_{\partial U_i}H^-\to0$.
\item If $M$ has a single end, $0<A_\infty<\infty$ and $\kappa<\infty$, there is a compact exhaustion $\{U_i\}$ of $M$ with smooth boundaries such that $\int_{\partial U_i}|H|\to0$.
\item If $M$ is non-collapsed, there is a compact exhaustion $\{U_i\}$ of $M$ with smooth boundaries such that $\int_{\partial U_i}H^-\to0$, and such that $\int_{\partial U_i}|H|\to0$ if moreover $A_\infty<\infty$.
\end{enumerate}
\end{theorem}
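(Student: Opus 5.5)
The plan is to produce each exhaustion from minimisers of a $\mu$-bubble type functional,
$$
\mathcal F_h(\Omega)=|\partial\Omega|-\int_\Omega h ,
$$
posed on a band $D_R\setminus \mathring D_r$. Here $D_r\supset B_r$ are smooth compact domains. The weight $h$ is smooth and equal to a small constant $\pm\varepsilon$ on most of the band. It tends to $+\infty$ at $\partial D_r$ and to $-\infty$ at $\partial D_R$, so that a minimiser $\Omega$ satisfies $D_r\subset\Omega\subset \mathring D_R$. Existence and regularity of such minimisers in dimensions $3\le n\le7$ are standard, as in the $\mu$-bubble arguments behind Theorem~\ref{theoremtrichotomy}, and $\partial\Omega$ is smooth with $H=h$ with respect to the outward normal. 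If $\partial\Omega$ avoids the thin layers where $|h|$ is large, then $H\equiv\pm\varepsilon$. This gives $\int_{\partial\Omega}H^\pm\le \varepsilon|\partial\Omega|$, and the real task is to bound $|\partial\Omega|$ and keep $\partial\Omega$ inside the region where $h=\pm\varepsilon$.

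For (1) I take $h=+\varepsilon$ on the bulk of the band. Comparing with a domain $\Omega'\supset D_{r}$ with $|\partial\Omega'|\le A(B_{r+1})+\varepsilon$ gives an area bound. Indeed $\mathcal F_h(\Omega)\le\mathcal F_h(\Omega')$ and $h\ge0$ on the bulk yield
$$
|\partial\Omega|\le A_\infty+\varepsilon+\varepsilon\,|D_R|,
$$
up to the contribution of the layers.

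Rather than fixing $\varepsilon$ first, I choose $R$, then $\varepsilon=\varepsilon_i\ll 1/(1+|D_R|)$. Then $|\partial\Omega|\le A_\infty+1$ and $\int H^+\le \varepsilon_i(A_\infty+1)\to0$. The remaining point is the inner layer, where $h$ is large and positive. I handle it by choosing the blow-up profile so that entering the layer costs more area than the volume gain can pay for, i.e. the usual barrier argument for $\mu$-bubbles. Contact with the outer layer only produces $H<0$, which is harmless for $\int H^+$. Taking $U_i=\Omega$ for an increasing sequence $r_i\to\infty$ with $D_{r_i}\subset\Omega_{i-1}$ gives the nested exhaustion.

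For (2) I take $h=-\varepsilon$ in the bulk, so $H^-$ is small on the part of $\partial\Omega$ in the bulk. Now the danger is that $\partial\Omega$ is pushed into the outer layer, or that it degenerates. This is where the single end, $A_\infty>0$ and $\kappa<\infty$ enter. Since $A(B_r)\to A_\infty>0$, every competitor separating $D_r$ from infinity has area at least $A(B_r)$, which is close to $A_\infty$. Hence $\partial\Omega$ cannot move to a far-out region with much less area, and the volume gain $\varepsilon|\Omega|$ is the only thing pulling it outward. Using $\kappa<\infty$, the volume of $B_R$ grows at most like $e^{CR}$ in terms of the harmonic radius lower bound $\tilde r\ge e^{-(\kappa+1)d}$. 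So choosing $\varepsilon\le e^{-CR}$ makes the volume term negligible against the area rigidity. That forces $\partial\Omega$ to stay at bounded distance from $\partial D_r$, in particular away from $\partial D_R$. If $A_\infty=\infty$, the same comparison works with area gaps in place of $A_\infty$.

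For (3) I combine the two: the area bound of (1) (using $A_\infty<\infty$) and the confinement of (2) give a minimiser with $H=-\varepsilon$ and $|\partial\Omega|\le A_\infty+1$. Hence $\int|H|\le\varepsilon(A_\infty+1)\to0$.

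For (4), non-collapsedness gives $\kappa=0$ and uniform local geometry. The $NT$ condition then holds, and a lower density bound replaces the single-end assumption when ruling out small far-away components. So the argument of (2), run end by end, and of (3) apply directly.

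The main obstacle I expect is the confinement step in (2): showing quantitatively that, with $\varepsilon$ chosen depending on $\kappa$ and $R$, the minimiser does not touch the outer layer where $h\to-\infty$. This requires turning $\kappa<\infty$ into a volume bound that the area rigidity $A(B_r)\approx A_\infty$ can dominate. If that fails, I would instead pick $R$ adaptively, among radii with $A(B_R)-A(B_r)$ small, and use a coarea slicing argument to find a radius where the outer barrier is cheap.
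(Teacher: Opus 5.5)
Your area bound in (1) is fine, but there is a genuine gap at the step you treat as routine: controlling $\partial\Omega$ inside the layers where $h$ blows up. The $\mu$-bubble barrier argument only keeps $\partial\Omega$ off $\partial D_r$ and $\partial D_R$ themselves, where $|h|=\infty$. Elsewhere in the layers $H=h$ is finite but arbitrarily large, and no choice of profile controls how far $\partial\Omega$ goes in.

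In (1), if $\partial\Omega$ stays out of the inner layer, then $H=h\le\varepsilon$ everywhere on $\partial\Omega$. So your ``remaining point'' is equivalent to producing, around every $D_r$, a domain with $H\le\varepsilon$ pointwise, i.e.\ condition $(H^+_\varepsilon)$. Whether $\partial\Omega$ enters the layer is decided by the mean curvature of $\partial D_r$ and nearby hypersurfaces, not by $h$. If it does enter, the comparison of $\Omega$ with $\Omega\cup L$ that the profile governs only bounds the volume integral $\int_{L\setminus\Omega}h$. It does not bound the surface integral $\int_{\partial\Omega\cap L}h=\int_{\partial\Omega\cap L}H$.

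The missing idea is to produce such walls separately. In Proposition~\ref{prop:decaying} the paper minimises with the weight $e^{-(n-1)\sigma\rho}$. Since $A_\infty<\infty$ makes the exponential growth rate of the area profile vanish, the weighted profile is $0$, and a Bi--Zhu bubble gives walls with $H\le\Lambda$ for every $\Lambda>0$. Theorem~\ref{thm:Hplus} then minimises with the weight $1+k\chi$ concentrated at such a wall. It uses $A(K')-A(K)<\delta$ to force the contact area below $\delta/k$, which is why $\Lambda$ must be uniform in $K$.

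The confinement step in (2) fails for the same structural reason, and your argument for it does not work, for three reasons.
\begin{itemize}
\item A lower bound $\tilde r\ge e^{-(\kappa+1)d}$ controls small balls, not $|B_R|$. A rotationally symmetric Cartan--Hadamard metric $dt^2+f(t)^2g_{S^{n-1}}$ with $f=e^{t^2}$ for large $t$ has sectional curvatures $O(d^2)$, hence $\tilde r\gtrsim 1/d$ and $\kappa=0$, yet its volume growth is super-exponential.
\item The sign is reversed. With $h=-\varepsilon$ the bulk functional is $|\partial\Omega|+\varepsilon|\Omega|$, so volume pulls inward and it is area that pushes outward.
\item A negligible volume term leaves you with a band area minimiser, and this can sit on the outer wall even when $A(B_r)$ is close to $A_\infty$. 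Take an end $dt^2+f(t)^2g_{S^{n-1}}$ with $f$ strictly decreasing to a positive limit (one end, $A_\infty>0$, $\kappa=0$). The $1$-Lipschitz projection of the example in Section~\ref{sectiontrichotomy} shows that the least-area hypersurface separating $S_r$ from $S_R$ is $S_R$ itself. So your bubble is driven into the outer layer.
\end{itemize}
Note also that confinement would give domains with $H\ge-\varepsilon$ pointwise for every $\varepsilon$. For $\kappa>0$ this is more than the paper extracts from these hypotheses.

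What the paper does instead is Proposition~\ref{prop:growing}: minimise with the weight $e^{(n-1)\sigma\rho}$, $\sigma>\kappa$. Then $A_\infty>0$ makes the weighted profile of $B_r$ tend to $\infty$, so band minimisers cannot escape. Lemma~\ref{lem:tentacles} excludes unbounded limits: density estimates for quasi-minimisers in charts of radius $\ge e^{-(\kappa+\varepsilon')d}$ are overwhelmed by the weight $e^{(n-1)\sigma d}$. This is where $\kappa$ genuinely enters. It yields outer walls with $H\ge-\Lambda$ only for $\Lambda>(n-1)\kappa$, and Theorem~\ref{thm:Hminus} then penalises contact with them to get the $L^1$ smallness.

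Parts (3) and (4) inherit both gaps. For (4) the paper does not argue end by end or via $NT$. It minimises $|\partial\Omega|+\lambda|\Omega|$ and uses non-collapsing to exclude both unbounded boundaries and unbounded domains (Proposition~\ref{prop:noncollapsed}).
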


Manifolds with bounded geometry are non-collapsed, and hence satisfy part $(4)$ of the previous theorem. Thus if a manifold with bounded geometry has $A_\infty<\infty$, there exists an exhaustion with arbitrarily small total mean curvature.

Finally, we apply our techniques to study the existence of exhaustions whose boundaries have mean curvature lower bounds close to a positive constant. We define the following linear isoperimetric condition on manifolds.

\begin{defn}
We say that $(M,g)$ satisfies a \emph{linear isoperimetric inequality with constant $c'>0$} if there is $C'\ge0$ such that $|\partial\Omega|\ge c'|\Omega|-C'$ for every compact domain $\Omega$.
\end{defn}

When the manifold satisfies a linear isoperimetric inequality with a constant $c'>0$, then under mild additional conditions, we get an exhaustion with boundaries having almost a $c$ lower bound of mean curvature for any $c<c'$. We refer to Theorem \ref{thm:cconvex} for the precise result. In particular, we have the following corollary.

\begin{corollary}\label{cor:cconvex}
Let $(M,g)$ be a complete Riemannian manifold of dimension $3\le n\le7$ satisfying a linear isoperimetric inequality with constant $c'>0$. Suppose that $M$ is non-collapsed, or that $M$ has a single end, infinite volume, and $\kappa<\infty$. Then for every $c\in(0,c')$ there exists a compact exhaustion $\{U_i\}$ of $M$ with smooth boundaries such that $\int_{\partial U_i}(H-c)^-\to0$.
\end{corollary}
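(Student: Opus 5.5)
The idea is to deduce Corollary \ref{cor:cconvex} from the more general Theorem \ref{thm:cconvex} by verifying its hypotheses; since that theorem is not yet available, I describe the variational argument directly, modelled on the proof of Theorem \ref{cor:L1}(2),(4) with the perimeter functional replaced by a $\mu$-bubble-type functional. Fix $c<c'$, a compact set $K$, and a large radius $R$. On compact domains $\Omega$ with $K\subset\mathring\Omega\subset B_R$, minimise
\[
\mathcal F_c(\Omega)=|\partial\Omega|-c|\Omega| .
\]
By the linear isoperimetric inequality, $\mathcal F_c(\Omega)\ge (c'-c)|\Omega|-C'$. So $\mathcal F_c$ is bounded below, and any minimising sequence has $|\Omega|$ and $|\partial\Omega|$ bounded independently of $R$.

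A minimiser $\Omega_R$ exists in $3\le n\le7$ by standard compactness and regularity for such prescribed mean curvature functionals. Away from $K$ and $\partial B_R$, $\partial\Omega_R$ is smooth with $H=c$. Where it touches the obstacle $\partial B_R$, the one-sided variation gives $H\ge c$ in the viscosity or weak sense. This is the place where we gain the sign: only the negative part $(H-c)^-$ can appear, and it is created only where the boundary is pushed against $K$. Taking $K$ inside the region, the minimiser is pushed outward, and outward variations decrease $\mathcal F_c$ only if $H<c$.

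The uniform volume bound shows that $\Omega_R$ cannot run off to infinity uncontrollably, and I then let $R\to\infty$. The main obstacle is that the limiting minimiser may be noncompact, or its boundary may touch $\partial B_R$. I handle this in two regimes.
\begin{itemize}
\item In the non-collapsed case, uniform local geometry gives monotonicity and density lower bounds. With the bounded perimeter, these confine $\partial\Omega_R$ to a fixed compact set once $R$ is large, because each far-away boundary piece would cost a definite amount of area. Otherwise, cutting such a piece off would lower $\mathcal F_c$, since the linear isoperimetric inequality makes the volume gain small relative to the area.
\item In the single-end, infinite-volume case with $\kappa<\infty$, I cannot confine the boundary, and I aim only for $\int(H-c)^-\to0$. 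I use the same smoothing and cut-off procedure as in Theorem \ref{cor:L1}(2). I intersect the possibly large minimiser with a sublevel set $\{d\le t\}$ and choose $t$ by a coarea argument, so that the added slice has small area. Then I replace the corner region by a smoothing whose mean curvature error has small $L^1$ norm. Here $\kappa<\infty$ controls the chart sizes, so the smoothing costs at most an exponentially small amount in $t$. Infinite volume and the single end ensure that $\Omega_R$ still exhausts the manifold, namely that the complement of $K$ is not trapped in a finite-volume piece.
\end{itemize}

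Finally, I choose $K_i$ increasing with $K_i\subset\mathring U_{i-1}$, so that $U_i:=$ (the modified minimiser for $K_i$) contains $U_{i-1}$ and exhausts $M$, and error at stage $i$ below $1/i$. The delicate step I expect to need the most care is the second regime: the slicing error must be controlled against the total perimeter bound $(c'-c)^{-1}$-type estimates. I would first try to bound the cut slice's area by $\frac{1}{T}|\Omega_R\cap\{t\le d\le t+T\}|$, which tends to zero along a sequence of $t$ since $|\Omega_R|$ is uniformly bounded.
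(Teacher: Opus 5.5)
\textbf{Sign error and the outer wall.} Your first-variation signs are reversed, and this hides where the real difficulty lies. Take a minimiser of $|\partial\Omega|-c|\Omega|$ over $K\subseteq\Omega\subseteq V$. Along $\partial K$ only outward variations are admissible, which gives $H\ge c$ there. At the outer wall only inward variations are admissible. An inward push with speed $\phi\ge0$ changes the functional by $\int(c-H)\phi$, so $H\le c$ on the outer contact set.

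Thus $(H-c)^-$ never arises at $K$; it arises only where the minimiser is pressed against the outer wall. To make $\int(H-c)^-$ small you therefore need two things your proposal does not supply:
\begin{itemize}
\item walls $\partial V$ arbitrarily far out with a pointwise bound $H_{\partial V}\ge-\Lambda$ (geodesic spheres $\partial B_R$ are in general neither smooth nor satisfy any such bound);
\item a small contact area with those walls.
\end{itemize}
The paper takes the walls from condition $(H^-_\Lambda)$. It makes the contact small with the weight $w=1+k\chi(\tilde\rho)$, which equals $1+k$ on $\partial V$. Comparing with an unweighted minimiser in a slightly smaller domain, and using $a^c_K(W)<A^c(K)+\delta$, gives $|\xi\cap\partial V|<\delta/k$ (Theorem \ref{thm:cconvex}).

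In the non-collapsed regime your confinement idea does survive once the sign is corrected. The density bounds and the perimeter bound $\alpha_K$ of Lemma \ref{lem:isoperimetric} make the boundary of the limit minimiser bounded. The volume bound then excludes a limit containing an end, since non-collapsed ends have infinite volume. As the limit never meets an outer wall, you even get $H\ge c$ a.e. This is the $F_c$-analogue of Proposition \ref{prop:noncollapsed}.

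\textbf{The collapsed regime has a genuine gap.} Truncating by $\{d\le t\}$ adds the slice $\Omega_R\cap\{d=t\}$ as new outer boundary. Its mean curvature (essentially $\Delta d$, or that of a smoothing of $d$) has no lower bound. The coarea choice of $t$ makes its area small, but small area does not make $\int(H-c)^-$ small.

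Nor does $\kappa<\infty$ help in the way you suggest. It only gives charts of size about $e^{-(\kappa+\epsilon)t}$, so smoothing at that scale costs curvature of order $e^{(\kappa+\epsilon)t}$. Your $O(1/T)$ slice-area bound cannot compensate for that. (Theorem \ref{cor:L1}(2) uses no such cut-off either.)

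The missing ingredient is the construction of far-out walls with $H\ge-\Lambda$, and this is where $\kappa$ really enters:
\begin{itemize}
\item Infinite volume and the isoperimetric inequality give $A_\infty>0$, since $|\partial\Omega|\ge c'|B_r|-C'\to\infty$ for $\Omega\supset B_r$.
\item Proposition \ref{prop:growing} then minimises the weighted area with $w_+=e^{(n-1)\sigma\rho}$, $\sigma>\kappa$. The weight outgrows the collapse rate in the density estimate, so Lemma \ref{lem:tentacles} rules out unbounded limits.
\item The resulting weighted hulls have $H\ge-(1+\varepsilon_0)(n-1)\sigma>-\Lambda$.
\end{itemize}
Feeding these walls into the penalised minimisation gives $|H-c|\le(2+c)k$ on the free part and $(H-c)^-\le\Lambda+c$ on a contact set of area $<\delta/k$. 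Hence $\int(H-c)^-<\varepsilon$.
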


Section \ref{sectiontrichotomy} of the paper is devoted to the trichotomy results, and Section \ref{section:smallcurv} discusses the existence of exhaustions with boundaries having small errors of curvature. Subsection \ref{subsec:smallcurv} concerns exhaustions with boundaries which are almost (weakly) mean convex or mean concave with small $L^1$ errors. Subsection \ref{sectioncurvatureconditions} studies the necessary curvature conditions on the manifolds and derives pointwise bounds on mean curvature of exhaustion boundaries. Finally, Subsection \ref{subsec:cconvex} discusses the existence of exhaustions with boundaries having positive lower bound of curvature with small errors.

\subsection*{Acknowledgement} The author would like to thank Yuguang Shi for his support.

\subsection*{AI Statement} The ideas and the proofs in this paper are the author's. Claude Opus 5 was used to verify the proofs, to search relevant literature, and to simplify and draft certain arguments. The same AI agent was used to check for errors during the writing process. The author remains responsible for the contents and writing of the paper.

\section{A trichotomy result on the mean curvature of an exhaustion}\label{sectiontrichotomy}

In this section we establish the trichotomy theorem regarding the existence of exhaustions in a non-compact manifold $M$ with a complete Riemannian metric. We begin by showing that if the manifold has $A_\infty =0$ or $A_\infty=\infty$, then it admits exhaustions with (weak) mean concave or mean convex boundaries, respectively.

\begin{proposition}\label{propAinfty0}
    Let $(M,g)$ be a complete Riemannian manifold of dimension $3\leq n \leq 7$, such that $A_\infty = 0$. Then there exists a compact exhaustion $\{U_i\}_{i=1}^\infty$ of $M$ with smooth boundaries such that each $\partial U_i$ is strictly mean concave.
\end{proposition}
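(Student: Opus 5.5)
The idea is to build each $U_i$ as a $\mu$-bubble, i.e.\ a minimiser of a weighted area functional whose Euler--Lagrange equation forces $H=-h$ for a positive function $h$. The hypothesis $A_\infty=0$ enters only through the fact that, since $A(K)\le A_\infty$, we have $A(K)=0$ for every compact $K$. So around any ball there are smooth compact domains with arbitrarily small boundary area, and this smallness is what keeps the bubble away from the inner ball. The exhaustion is then produced inductively: given $U_{i-1}$, choose $R_i$ with $U_{i-1}\subset B_{R_i}$ and construct $U_i\supset B_{R_i}$, so that $U_{i-1}\subset\mathring U_i$. Choosing $R_i\to\infty$ gives $\bigcup U_i=M$.

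\emph{Construction of one step.} Fix $R$ and a small $\eta>0$, to be chosen below. Since $A(\overline{B_{R+3}})=0$, pick a compact domain $\Omega_0\supset \overline{B_{R+3}}$ with smooth boundary and $|\partial\Omega_0|<\eta/2$. On $\mathring\Omega_0$ choose a smooth function $h>0$ with the following properties:
\begin{itemize}
\item $h$ equals a tiny constant $\varepsilon$ on a compact subset $\Omega_0'\supset\overline{B_{R+3}}$ of $\mathring\Omega_0$, where $\Omega_0'$ is a slightly shrunk copy of $\Omega_0$ with $|\partial\Omega_0'|<\eta/2$;
\item $h\to+\infty$ at $\partial\Omega_0$.
\end{itemize}
Among Caccioppoli sets $\Omega$ with $\overline{B_R}\subset\Omega\subset\Omega_0$ (so that $\Omega\setminus\overline{B_R}$ has compact closure in $\mathring\Omega_0$), minimise
$$\mathcal F(\Omega)=|\partial\Omega|+\int_{\Omega\setminus B_R}h .$$
The set $\Omega_0'$ is a competitor, so $\inf\mathcal F\le \eta/2+\varepsilon|\Omega_0|<\eta$ once $\varepsilon$ is small. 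Existence of a minimiser follows from compactness of finite perimeter sets inside the compact region $\Omega_0$. Because $h\to+\infty$ at $\partial\Omega_0$, the standard $\mu$-bubble barrier argument (as in Bi--Zhu and Gromov) shows that the minimiser's boundary stays a definite distance away from $\partial\Omega_0$. By regularity for almost-minimisers in dimension $n\le7$, $\partial\Omega$ is then smooth away from the inner obstacle. The first variation $\int_{\partial\Omega}(H+h)\varphi=0$ gives $H=-h<0$, i.e.\ strict mean concavity with respect to the outward normal.

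\emph{Keeping away from the inner obstacle.} This is the step I expect to be the main obstacle. Inside the shell $S=B_{R+2}\setminus\overline{B_{R+1}}$, which lies in the compact region $\overline{B_{R+3}}$ with geometry controlled by $R$ alone, the boundary $\partial\Omega$ is an almost-minimiser with small mass term. The monotonicity formula (or the relative isoperimetric inequality) therefore gives $|\partial\Omega\cap B(x,1/2)|\ge c(R)>0$ for any $x\in\partial\Omega\cap S$. Choosing $\eta<c(R)$ forces $\partial\Omega\cap S=\emptyset$. Since $\Omega\supset B_R$, each component of $S$ meeting $\partial B_{R+1}$ lies in $\Omega$. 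If connectedness of the shell is an issue, I would replace $\Omega$ by $\Omega\cup\overline{B_{R+1}}$ and check that this does not increase $\mathcal F$ beyond the tiny $\varepsilon|B_{R+1}|$ term, since otherwise area would be trapped in $S$. Either way $\partial\Omega$ misses $\overline{B_{R+1}}$ entirely. Hence $\Omega$ is an unconstrained local minimiser near its boundary, $\partial\Omega$ is smooth everywhere, and $H=-h<0$ holds on all of $\partial\Omega$.

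\emph{Conclusion.} Setting $U_i=\Omega$ gives a compact domain with $B_{R_i+1}\subset\mathring U_i$ and smooth, strictly mean concave boundary. No single-end or $NT$ hypothesis is used, because the only global input is $A(K)=0$ for every compact $K$. The remaining care is checking that the constants $\eta$ and $\varepsilon$ depend only on the geometry of $B_{R+3}$, and hence can be chosen before $\Omega_0$.
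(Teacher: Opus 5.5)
Your overall strategy (a $\mu$-bubble with $H=-h<0$ inside a domain $\Omega_0$ whose boundary has tiny area) is the Bi--Zhu construction that the paper invokes. However, the step you flag as the main obstacle is not closed.

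From $\partial\Omega\cap S=\emptyset$ you infer that every component of $S$ lies in $\Omega$ ``since $\Omega\supset B_R$''. This does not follow. The region enclosed by a hypersurface in $B_{R+1}\setminus\overline{B_R}$ surrounding $\overline{B_R}$ contains $\overline{B_R}$, has no boundary in $S$, and is disjoint from $S$. For the constrained minimiser, this is precisely the configuration you must exclude: $\partial\Omega$ clinging to the inner obstacle. It may even have a contact set on $\partial B_R$, where $H=H_{\partial B_R}$ has no sign and $\partial B_R$ need not be smooth.

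Your density estimate only controls $\partial\Omega\cap S$, and it cannot be pushed down to the obstacle. At a point $x$, only balls of radius less than $d(x,\overline{B_R})$ are available for admissible comparisons, so the bound degenerates, and $\eta<c(R)$ does not rule this configuration out. The fallback is circular. If a component of $S$ is omitted, then $\Omega\cup\overline{B_{R+1}}$ acquires the adjacent pieces of $\partial B_{R+1}$, which are neither small nor smooth. So the bound $\mathcal F(\Omega\cup\overline{B_{R+1}})\le\mathcal F(\Omega)+\varepsilon|B_{R+1}|$ presupposes the conclusion. Moreover, since this set is not a minimiser, such an inequality would say nothing about its boundary anyway.

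The missing ingredient is a lower bound for the perimeter of such trapped configurations. Suppose $\overline{B_R}\subseteq\Omega$ and $|\Omega\cap B(z,s)|=0$ for a ball $B(z,s)$ in the shell. Along a minimising geodesic $\gamma$ from $p$ to $z$, the ratio $|\Omega\cap B(\gamma(t),s)|/|B(\gamma(t),s)|$ passes from $1$ to $0$. The relative isoperimetric inequality at a point where it equals $\frac12$, exactly as in the proof of Lemma \ref{lem:noncollapsed}, then gives $|\partial^*\Omega|\ge c_2(R)>0$, with $c_2(R)$ depending only on $R$.

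To ensure that every omitted component contains such a ball, replace the metric shell by a collar $\{0<d(\cdot,D)<\tau\}$ of a connected smooth compact domain $D$ with $\overline{B_{R+1}}\subset D\subset B_{R+2}$. Take $\tau$ below the normal injectivity radius of $\partial D$, so each collar component is a product and contains a ball of radius $\tau/3$. With $\eta<\min\{c(R),c_2(R)\}$, $\partial\Omega$ misses the collar and the whole collar lies in $\Omega$. Then $U=\Omega\cup D$ is the domain you want. This is not because it minimises $\mathcal F$, but because its boundary is the part of $\partial\Omega$ outside $D$ and the collar, which is free boundary with $H=-h<0$.

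A minor point: $\varepsilon$ must be chosen after $\Omega_0$, since you need $\varepsilon|\Omega_0|<\eta/2$. This is harmless, because $h=\varepsilon\le1$ on $B_{R+3}$ keeps the density constant uniform, but the order of choices is $\eta$, then $\Omega_0$, then $\varepsilon$.
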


\begin{proof}
    The proof is identical to that of Theorem $1.2$ in \cite{BZ}. Since $A_\infty = 0$, we get hypersurfaces $X_i$ that separate the ends from a compact domain, and $\lvert X_i\rvert \to 0$ as $i\to\infty$. Taking a compact set $K$ with boundary $\partial K$, we can choose an outer wall $X_i$ with sufficiently small area, so that the $\mu$-bubble construction in \cite{BZ} holds. This gives us a smooth hypersurface $\xi_1$ that separates the ends from $K$ and has strictly negative mean curvature. Choose $U_1$ to be the compact domain bounded by $\xi_1$. Now, choose a larger domain $K'$ strictly containing $U_1$ in its interior and repeat the process to get a mean concave smooth hypersurface $\xi_2$, and the compact domain $U_2$. On iterating, we get a compact exhaustion of $M$ with strictly mean concave boundaries.
\end{proof}

\begin{proposition}\label{propAinftyinfty}
    Let $(M,g)$ be a complete, single-ended Riemannian manifold of dimension $3\leq n \leq 7$, such that $A_\infty = \infty$ and $M$ satisfies condition $NT$. Then there exists a compact exhaustion $\{U_i\}_{i=1}^\infty$ of $M$ with smooth boundaries such that each component of $\partial U_i$ is either strictly mean convex or minimal. In particular, we get an exhaustion $U_i$ such that either each $\partial U_i$ has a strictly mean convex component, or each $\partial U_i$ is minimal.
\end{proposition}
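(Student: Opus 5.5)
The plan is to produce, for each large compact set $K$, a hypersurface enclosing $K$ that minimises area among all such enclosing hypersurfaces, and to read off the sign of its mean curvature from the obstacle problem. Fix a compact set $K\supset K_0$ with smooth boundary. Because $A_\infty=\infty$, we have $A(B_R)\to\infty$. Hence we can pick $R$ large with $A(B_R)>A(K)+1$; here $A(K)<\infty$, since $K$ sits inside a smooth compact domain.

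\textbf{Step 1: the minimising sequence stays bounded.} Take a minimising sequence of compact smooth domains $\Omega_j\supset K$ with $|\partial\Omega_j|\to A(K)$. By compactness of sets of finite perimeter on compact sets, together with a diagonal argument, we extract a local limit $\Omega$. This $\Omega$ is the closure of an open set, contains $K$, and has $|\partial\Omega|\le A(K)<\infty$. By lower semicontinuity it is perimeter minimising among compact perturbations containing $K$; this follows by a standard cut-and-paste comparison on large spheres where the perimeter of $\Omega_j$ is small.

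Now suppose $\partial\Omega$ were unbounded. Then $|\partial\Omega|\le A(K)<\infty=A_\infty$, so $\Omega$ would be a tentacle for $K$, contradicting condition $NT$. Hence $\partial\Omega$ is bounded. Since $M$ has a single end and $\Omega$ has finite perimeter, either $\Omega$ is bounded or it contains a neighbourhood of the end.

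\textbf{Step 2: ruling out the end case.} If $\Omega$ contains a neighbourhood of the end, then $\partial\Omega$ bounds, on the complementary side, a compact region. I then expect that $\partial\Omega$ itself separates $K$ from infinity within a compact domain. Concretely, $\Omega'=\Omega\cap B_R$ for large $R$ has boundary $\partial\Omega\cup(\text{part of }\partial B_R)$, which should make $|\partial\Omega|$ large once compared with $A(B_{R'})$. This is the step where I expect the main obstacle. One needs to exclude that the minimising sequence ``escapes'', in the sense that the $\Omega_j$ become huge with boundary mass running off to infinity. The cleanest argument I would try is the following. If the boundaries $\partial\Omega_j$ had mass escaping past $B_R$, then $\partial\Omega_j\setminus B_R$ would separate $B_R$ from infinity (single end), forcing mass at least $A(B_R)>A(K)+1$. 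So, up to modifications, the $\partial\Omega_j$ lie in a fixed compact set, and the limit $\Omega$ is a compact domain attaining $A(K)$.

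\textbf{Step 3: regularity and the sign of the mean curvature.} Since $3\le n\le 7$, regularity for the obstacle problem (one-sided minimisers) gives that $\partial\Omega$ is $C^{1,1}$, and smooth away from $\partial K$. On the part of $\partial\Omega$ off the obstacle, $\partial\Omega$ is minimal. Where it touches $K$, the one-sided variation (pushing outward only) gives $H\ge 0$ in a weak sense.

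To get smooth boundaries that are strictly mean convex or minimal componentwise, I would instead use a slightly larger smooth obstacle $K$ whose boundary is strictly mean convex where touched. Alternatively, I would run a short mean curvature flow / perturbation: a component not entirely minimal has $H\ge0$ and $H\not\equiv0$. By the strong maximum principle, flowing it briefly by mean curvature makes it strictly mean convex while keeping it outside $K$. Components with $H\equiv0$ are smooth minimal hypersurfaces and are kept.

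\textbf{Step 4: building the exhaustion.} Set $U_1=\Omega$ for $K=K_1$. Then choose $K_2\supset U_1\cup B_2$ with interior containment, repeat to get $U_2$, and iterate. Since $U_i\supset B_i$, this gives an exhaustion. Passing to a subsequence yields the dichotomy in the last sentence of the statement.
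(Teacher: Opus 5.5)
Your Steps 1, 3 and 4 are sound, and together they form a genuinely different route from the paper's. The paper uses band minimisers in $V_i\setminus\mathring K$ and shows that for some $i$ the minimiser avoids the outer wall $\partial V_i$. You instead minimise $A(K)$ globally, show by cut-and-paste that the $L^1_{\mathrm{loc}}$ limit $\Omega$ minimises among compact perturbations containing $K$, and let $NT$ force $\partial\Omega$ to be bounded. In that cut-and-paste, the quantity that must be small is the slice error $\mathcal H^{n-1}(\partial B_s\cap(\Omega_j\Delta\Omega))$, which $L^1_{\mathrm{loc}}$ convergence and the coarea formula supply. It is not the perimeter of $\Omega_j$ near $\partial B_s$.

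The gap is Step 2, which is the only place where $A_\infty=\infty$ enters. In the end case $\Omega$ contains both $K$ and a neighbourhood of the end, so $\partial\Omega$ does not separate $K$ from infinity. Comparing with $\Omega\cap B_R$ only gives $|\partial(\Omega\cap B_R)|\ge A(K)$, which is vacuous. In fact no contradiction can be extracted from $\Omega$ alone. Filling in the compact set $M\setminus\Omega$ (as the paper does with $W$) gives $\Omega=M$ and $\partial\Omega=\emptyset$, which is consistent. This is exactly the escape scenario that the paper excludes using $A_\infty=\infty$ when its band minimisers leave every compact set. Your fallback claim, that boundary mass of $\Omega_j$ escaping past $B_R$ forces $\partial\Omega_j\setminus B_R$ to separate $B_R$ from infinity, is false as stated: a minimising sequence can carry thin fingers or tiny far-away components to infinity without separating anything. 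For the same reason the $\partial\Omega_j$ need not stay in a fixed compact set. Nor is that needed, since only the limit has to be bounded.

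The missing step is to go back to the sequence and use its convergence on an annulus. Suppose $\Omega\supset M\setminus B_{R_0}$, and choose $R>R_0$ with $A(B_R)>A(K)+1$. Since $|(B_{R+1}\setminus B_R)\setminus\Omega_j|\to0$, the coarea formula for $d$ gives generic radii $s_j\in(R,R+1)$ with $\mathcal H^{n-1}(\{d=s_j\}\setminus\Omega_j)\to0$. Then $\Omega_j\cup B_{s_j}$ is a bounded set containing $\bar B_R$ in its interior, so
\[
A(B_R)\le|\partial(\Omega_j\cup B_{s_j})|\le|\partial\Omega_j|+\mathcal H^{n-1}(\{d=s_j\}\setminus\Omega_j)\longrightarrow A(K),
\]
which is a contradiction. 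Hence $\Omega$ is bounded and attains $A(K)$. It is then a band minimiser that does not touch any far outer wall, and from there your Steps 3 and 4 agree with the paper. The repaired argument uses only $A(K)<A_\infty$, so it also gives Proposition~\ref{prop:strict}.

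One small correction in Step 3: a short mean curvature flow moves a weakly mean convex surface inward, so it need not stay outside $K$. It stays $C^1$-close, which is enough provided $K_{i+1}$ contains $U_i$ with some margin.
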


\begin{proof}
     Take a compact domain $K\subset M$ with smooth boundary such that $M\setminus{K}$ is connected and unbounded. This is possible because $M$ has one end. Consider a larger compact domain $V$ such that $K$ lies in the interior of $V$, and $X=V\setminus \mathring{K}$ has a single connected component. Then $\partial V$ and $\partial K$ together bound the band $X$. Let $\mathcal{C}(X,\partial V,\partial K)$ denote the class of those smooth closed hypersurfaces in $X$ which separate the boundaries $\partial V$ and $\partial K$. Minimise the area amongst all hypersurfaces in $\mathcal{C}(X,\partial V,\partial K)$. Since the dimension of the ambient manifold is $3\leq n \leq 7$, the minimiser exists and is a $C^{1,1}$ hypersurface $\xi$ which is smooth away from the boundaries of $X$. 

     We first assume that $\xi$ lies entirely in the interior of $X$. Then, $\xi$ is a local area minimiser in $M$ and hence is a stable minimal hypersurface. If instead $\xi$ touches $\partial K$ but not $\partial V$, then the mean curvature is non-negative a.e. on $\xi\cap \partial K$. In $\xi\cap \mathring{X}$, $\xi $ is smooth and has $0$ mean curvature. Therefore, $\xi$ is a $C^{1,1}$ hypersurface with nonnegative mean curvature. Running the mean curvature flow on $\xi$ for a very short time gives us a smooth weakly mean convex hypersurface $\xi'$, such that on those components where $\xi'$ is not minimal, it is strictly mean convex.

     Now, the other possibility is that $\xi$ touches $\partial V$ and is not a local area minimiser in $M$. Then, the mean curvature is non-positive a.e. on $\xi\cap \partial V$. Consider a compact domain $V'\supset V$ such that $\xi$ is in the interior of $V'$, and the band $X'=V'\setminus \mathring{K}$. We again minimise the area in the class $\mathcal{C}(X',\partial V',\partial K)$, and get a $C^{1,1}$ minimising hypersurface $\xi'$. If $\xi'$ does not touch $\partial V'$, then we get a minimal or a weakly mean convex hypersurface as before. Hence, we can again assume that $\xi'$ touches $\partial V'$.

     Now, we keep taking an increasing sequence of compact domains $\{V_i\}$ exhausting $M$ such that ${V}'\subset \mathring V_1$, and ${V_i}\subset \mathring V_{i+1}$. We similarly find the area minimiser in each band $X_i=V_i\setminus \mathring{K}$, and get a sequence of $C^{1,1}$ hypersurfaces $\{\xi_i\}$. We assume that any minimiser in $X_i$ touches $\partial V_i$, since otherwise we obtain a weakly mean convex or minimal hypersurface.

     Consider the sequence $\{\xi_i\}$. Since $X_i\subset X_{i+1}$ for all $i$, each $\xi_i$ is a competitor for any $\xi_j$, $j>i$. Hence, $\lvert \xi_j\rvert\leq\vert \xi_i \rvert$ for $j>i$, and the sequence has decreasing area. Assume that there exists a compact set $Z$ and a subsequence $\{\xi_{i_k}\}$ such that each member of the subsequence intersects $Z$. Then we consider the limit of this subsequence, which must also intersect $Z$ by a standard density argument. The limit must also be locally minimising. First, assume that the limit is either an unbounded hypersurface in $M$, or a bounded hypersurface of area not greater than any $|\xi_i|$. In the first case, since the sequence $\{\xi_{i}\}$ has decreasing area, the limit hypersurface has area less than $A_\infty=\infty$. This contradicts condition $NT$. If instead the limit is bounded, then consider the case that it separates $K$ from the end. Then, it lies in the interior of some $V_i$, and we get a contradiction to every minimiser in $\mathcal{C}(X_i,\partial V_i,\partial K)$ touching $\partial V_i$. Otherwise, since the manifold has a single end, the limit is a bounded hypersurface which does not separate $K$ from the end, and hence it bounds some compact set $W$ which is disjoint from $\mathring K$. In that case, filling in $W$ gives a competitor of lesser area, contradicting the minimality of the limit.

     Therefore, any infinite subsequence of $\{\xi_i\}$ must eventually entirely leave any compact domain in $M$. Then we get an infinite sequence of end-separating hypersurfaces in $M$ with area bounded above, contradicting the fact that $A_\infty =\infty$.
     
     Therefore, we must eventually find a $V_i$ and a minimiser $\xi_i$ such that $\xi_i$ does not intersect $\partial V_i$. Then, $\xi_i$ is either minimal, or weakly mean convex with at least one strictly mean convex component. Now, we choose a new compact domain $K'$ containing $V$ such that the compact domain bounded by $\xi_i$ lies in the interior of $K'$, and repeat the process. We continue iteratively by choosing a sequence of domains $\{K'_i\}$ such that they exhaust $M$. 

     Thus, we get an infinite nested sequence $\{\zeta_i\}$ of smooth closed hypersurfaces that separate the end of $M$ from a compact domain. Each $\zeta_i$ is either minimal, or weakly mean convex with at least one strictly mean convex component. Additionally, each $\zeta_i$ bounds a compact domain $U_i$ of $M$. If there are infinitely many minimal hypersurfaces in $\{\zeta_i\}$, we discard the hypersurfaces with a mean convex component to get an infinite sequence $\{\zeta_{i_j}\}$ of minimal hypersurfaces. Then, taking the corresponding domains $\{U_{i_j}\}$, we get an exhaustion of $M$ with minimal boundary. If not, then discarding the finitely many minimal elements of $\{\zeta_i\}$, we get an infinite subsequence of weakly mean convex boundaries with at least one strictly mean convex component. Taking the corresponding compact domains bounded by the elements of this subsequence then gives us an exhaustion of $M$ with weakly mean convex boundaries and at least one strictly mean convex component.
      \end{proof}

\begin{remark}
    While sublinear growth guarantees $A_\infty=0$ and hence there exists a mean concave exhaustion, superlinear growth is not enough to guarantee a mean convex exhaustion.
\end{remark}

We now show that the assumptions in Propositions \ref{propAinfty0} and \ref{propAinftyinfty} can be further weakened to the situations where the minimum separating hypersurfaces have increasing or decreasing area.

\begin{proposition}\label{prop:strict}
Let $(M,g)$ be a complete, single-ended Riemannian manifold of dimension $3\le n\le 7$ which satisfies condition $NT$. Suppose that $A(K)<A_\infty$ for every compact set $K\subset M$; equivalently, every compact set $K$ is contained in a compact set $K'$ with $A(K')>A(K)$. Then there exists a compact exhaustion $\{U_i\}_{i=1}^\infty$ of $M$ with smooth boundaries such that each component of $\partial U_i$ is either strictly mean convex or minimal. In particular, either each $\partial U_i$ has a strictly mean convex component, or each $\partial U_i$ is minimal.
\end{proposition}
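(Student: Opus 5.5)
We follow the proof of Proposition \ref{propAinftyinfty} closely; the hypothesis $A_\infty=\infty$ was only used at the very last step, to rule out minimisers escaping to infinity, and we now replace it with the strict profile condition. If $A_\infty=\infty$ we are done by Proposition \ref{propAinftyinfty}, so we assume $A_\infty<\infty$. Fix a compact domain $K_1\supset K_0$ with smooth boundary such that $M\setminus K_1$ is connected. By hypothesis there is a compact $K\supset K_1$ with $A(K)>A(K_1)$. Since $A(K)<A_\infty$, we may even choose $K$ so that $A(K_1)<A(K)\le A_\infty$. We then enlarge $K$ to a compact smooth domain, and consider bands $X_i=V_i\setminus\mathring K$ for an exhaustion $\{V_i\}$ with $K\subset\mathring V_1$. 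In each band we minimise area in $\mathcal C(X_i,\partial V_i,\partial K)$ and obtain $C^{1,1}$ minimisers $\xi_i$.

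As in Proposition \ref{propAinftyinfty}, if some $\xi_i$ avoids $\partial V_i$, then it is either minimal, or touches $\partial K$ and has $H\ge0$. A short-time mean curvature flow then produces a smooth hypersurface each of whose components is strictly mean convex or minimal, enclosing $K$. So we assume every $\xi_i$ touches $\partial V_i$. The areas $|\xi_i|$ are non-increasing. They are bounded below by $A(K)$, and also bounded above by roughly $A(K)$: any smooth $\Omega\supset K$ with $|\partial\Omega|$ close to $A(K)$ is eventually contained in some $V_i$, and is therefore a competitor. Hence $|\xi_i|\to A(K)$.

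Next we split into two cases.
\begin{itemize}
\item[(a)] A subsequence meets a fixed compact set $Z$. Taking a limit, we obtain a hypersurface that is locally perimeter minimising among compact perturbations containing $K$, with area at most $A(K)\le A_\infty$. If the limit is unbounded, the resulting domain is a tentacle for $K\supset K_0$, contradicting $NT$. If it is bounded and separates $K$ from the end, it lies inside some $V_i$, contradicting the assumption that each $\xi_i$ touches $\partial V_i$. If it is bounded and does not separate, we fill in the enclosed region as before and contradict minimality.
\item[(b)] The $\xi_i$ eventually leave every compact set. This is the step where the new hypothesis enters, and I expect it to be the main obstacle. For each fixed compact $L$ with $K\subset L$, the hypersurface $\xi_i$ eventually lies outside $L$, so the domain it bounds contains $L$ in its interior. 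Smoothing the $C^{1,1}$ hypersurface at small area cost gives $A(L)\le|\xi_i|+\epsilon$. Letting $i\to\infty$ yields $A(L)\le A(K)$ for all $L$, hence $A_\infty\le A(K)<A_\infty$, a contradiction.
\end{itemize}
The care needed in case (b) is in two places. One must ensure that the region enclosed by $\xi_i$ really contains $L$; this holds because $\xi_i$ separates $\partial K$ from $\partial V_i$ and $M$ has a single end, so the unbounded side lies beyond $\xi_i$. One must also make the approximation of the $C^{1,1}$ boundary by smooth boundaries with controlled area precise.

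Therefore some $\xi_i$ avoids $\partial V_i$, and we obtain a boundary $\zeta_1$ enclosing $K$ with each component strictly mean convex or minimal. We then iterate: choose a new $K_1'$ containing the domain bounded by $\zeta_1$ and an exhausting sequence of compact sets, and repeat the argument. This produces nested domains $U_i$ exhausting $M$. Finally we pass to a subsequence so that either all boundaries are minimal or all have a strictly mean convex component, exactly as at the end of Proposition \ref{propAinftyinfty}.
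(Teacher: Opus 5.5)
Your proposal is correct and follows essentially the same route as the paper, including the key step (b): escaping minimisers would give $|\xi_i|\ge A(\overline{B}_r)$ for every $r$, and letting $i\to\infty$ yields $A(K)\ge A_\infty$, contradicting the strict profile. Two small points to tidy. The containment of $L$ in the region bounded by $\xi_i$ follows from $L$ being connected and disjoint from $\xi_i$ (take $L=\overline{B}_r$, as the paper does), not from single-endedness. Also, the preliminary choice of $K$ with $A(K)>A(K_1)$ and the separate treatment of $A_\infty=\infty$ are unnecessary, since $A(K)<A_\infty$ already holds for every compact $K$.
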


\begin{proof}
We follow the proof of Proposition \ref{propAinftyinfty}. In that proof, the assumption $A_\infty=\infty$ was used to rule out an unbounded limit hypersurface by condition $NT$, and to rule out the case where the minimisers $\xi_i$ eventually leave every compact set. We show that the assumption that every compact set $K$ is contained in a compact set $K'$ with $A(K')>A(K)$ is enough to rule out these two situations. The rest of the proof is unchanged.

We consider the minimisers $\xi_i$ in $\mathcal{C}(X_i,\partial V_i,\partial K)$ as before. First, the sequence $\{\xi_i\}$ satisfies $|\xi_i|\to A(K)$. Indeed, $|\xi_i|\ge A(K)$ for every $i$. Conversely, any compact domain $\Omega$ with smooth boundary and $K\subset\mathring{\Omega}$ lies in $V_i$ for large $i$, since the $V_i$ exhaust $M$, and then $|\xi_i|\le|\partial\Omega|$.

Suppose a subsequence meeting a fixed compact set $Z$ converges to an unbounded hypersurface. By the previous paragraph, its area is at most $\lim_i|\xi_i|=A(K)\le A_\infty$, and in particular it is finite. It therefore contradicts condition $NT$, exactly as before. The two cases of a bounded limit, according to whether or not it separates $K$ from the end, are ruled out by the same argument as in Proposition \ref{propAinftyinfty}.

Now, suppose the $\xi_i$ eventually leave every compact set. Fix a point $p$ in $K$, and a radius $r$ with $K\subset B_r=B(p,r)$. For $i$ large, $\xi_i$ does not meet $\overline{B}_r$, the closure of $B_r$. Since $\overline{B}_r$ is connected and contains $K$, it then lies in the interior of the domain bounded by $\xi_i$, so $|\xi_i|\ge A(B_r)$. Letting $i\to\infty$ gives $A(K)\ge A(B_r)$ for every $r$, hence $A(K)\ge A_\infty$. This contradicts the hypothesis, and replaces the contradiction with $A_\infty=\infty$ in Proposition \ref{propAinftyinfty}.

Hence, as before, for some $i$ there is a minimiser $\xi_i$ that does not touch $\partial V_i$. It is either minimal, or weakly mean convex with a strictly mean convex component. The hypothesis holds for every compact set, so we can iterate with an exhausting sequence of domains as before to obtain the required exhaustion.
\end{proof}

\begin{remark}
The hypothesis $A(K)<A_\infty$ cannot be dropped. Consider an end whose cross-sections decrease to a positive limit. Then $A(K)=A_\infty$ for all large $K$. By comparison with the cross-sections at an innermost point, every far separating hypersurface has a point of negative mean curvature, so no weakly mean convex exhaustion exists.
\end{remark}

\begin{proposition}\label{prop:flat}
Let $(M,g)$ be a complete, single-ended Riemannian manifold of dimension $3\le n\le 7$ which satisfies condition $NT$. Suppose that there is a compact set $K_0$ with $A(K_0)=A_\infty$. Suppose also that for every compact set $K\supseteq K_0$ the infimum $A(K)$ is not attained, that is, there is no bounded set $\Omega\supseteq K$ of finite perimeter with $|\partial\Omega|=A(K)$. Then there exists a compact exhaustion $\{U_i\}_{i=1}^\infty$ of $M$ with smooth boundaries such that each component of $\partial U_i$ is either strictly mean concave or minimal. In particular, either each $\partial U_i$ has a strictly mean concave component, or each $\partial U_i$ is minimal.
\end{proposition}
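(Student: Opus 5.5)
The strategy mirrors the band-minimisation argument of Proposition \ref{propAinftyinfty}, except that here the outer wall is what gets touched by the minimisers. Given a compact domain $K\supseteq K_0$ with smooth boundary and $M\setminus K$ connected, we take an exhausting sequence of compact domains $V_i\supset K$ and, for each $i$, minimise area in the class $\mathcal{C}(X_i,\partial V_i,\partial K)$ with $X_i=V_i\setminus\mathring K$. Since $3\le n\le7$, each minimiser $\xi_i$ is a $C^{1,1}$ hypersurface, smooth away from $\partial X_i$. As in the proof of Proposition \ref{prop:strict}, we have $|\xi_i|\to A(K)$.

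\emph{Step 1: $\xi_i$ cannot stay in the interior of $X_i$.} If $\xi_i\subset\mathring X_i$, it bounds a compact domain $\Omega\supseteq K$. Because $\xi_i$ is also a competitor for every later band, this forces $|\partial\Omega|=A(K)$, so $A(K)$ is attained, contradicting the hypothesis. The touching set must therefore be nonempty. To pass from this to actual touching of $\partial V_i$, we use the monotonicity $A(K)\le A(\Omega)\le A_\infty=A(K_0)\le A(K)$, which gives $A(\Omega)=A(K)$ for every such $\Omega$.

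\emph{Step 2: ruling out touching on $\partial K$ along a subsequence.} Suppose that, for all large $i$, some minimiser meets a fixed compact set $Z\supset K$. Then a subsequence converges locally to a limit that is locally perimeter minimising among sets containing $K$. This limit is either unbounded, with area $\le A(K)=A_\infty<\infty$, which would be a tentacle for $K$ and is excluded by $NT$; or bounded, in which case it realises $A(K)$, which is excluded by non-attainment. The non-separating bounded case is removed by the filling argument of Proposition \ref{propAinftyinfty}. Consequently the $\xi_i$ eventually leave every compact set. In particular, for large $i$ we have $\xi_i\cap\partial K=\emptyset$ and $\xi_i$ touches $\partial V_i$ only.

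\emph{Step 3: extracting a mean concave hypersurface.} Because $\xi_i$ is a one-sided minimiser against the obstacle $\partial V_i$, it has $H\le0$ a.e.\ on $\xi_i\cap\partial V_i$ and $H=0$ elsewhere. A short-time mean curvature flow then produces a smooth hypersurface each of whose components is either strictly mean concave or minimal. Since $\xi_i$ lies outside any prescribed compact set, the domain $U$ it bounds contains $K$. We iterate with $K$ replaced by a compact set containing $U$ in its interior, which yields the exhaustion, and then select a subsequence as at the end of Proposition \ref{propAinftyinfty} to obtain the final dichotomy.

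\emph{Main obstacle.} The delicate part is Step 3, specifically the claim that the flow preserves the sign. Under mean curvature flow, the components that carry a nonempty touching set should become strictly concave by the strong maximum principle, while untouched components stay minimal. There is also an orientation subtlety: the outward normal of the domain bounded by $\xi_i$ must agree with that of $V_i$ at the contact set. Finally, Step 2 needs care to check that the limit in the unbounded case genuinely satisfies the perimeter-minimising property (3) of a tentacle. I would prove this by the standard cut-and-paste comparison with the $\xi_i$.
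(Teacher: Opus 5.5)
Steps 2 and 3 are the paper's proof. You take band minimisers $\xi_i$ with $|\xi_i|\to A(K)=A_\infty$ and rule out a limit that meets a fixed compact set: an unbounded limit is a tentacle, a bounded separating limit attains $A(K)$, and a bounded non-separating limit is removed by filling. Then $H=0$ on the free part and $H\le0$ a.e.\ on $\xi_i\cap\partial V_i$, a short mean curvature flow smooths, and you iterate. Your Step 2 assumes only that a subsequence meets a fixed compact $Z$, whereas the paper assumes it touches $\partial K$. The same trichotomy handles this, and your version also shows that the enclosed domains eventually contain any prescribed compact set.

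\emph{Step 1 is wrong and should be deleted.} That $\xi_i$ is a competitor in every later band only gives $|\xi_j|\le|\xi_i|$ for $j>i$. It gives no upper bound $|\xi_i|\le A(K)$, so attainment does not follow. The chain $A(K)\le A(\Omega)\le A_\infty\le A(K)$ concerns $A(\Omega)$, not $|\partial\Omega|$, so it forces neither attainment nor contact with $\partial V_i$. The claim is in fact false. Take a warped end whose cross-sections have necks (local minima of $|S_t|$) with areas strictly decreasing to $A_\infty$. For suitable $V_i$, the band minimiser is then a neck slice lying in $\mathring X_i$ with area $>A(K)$, and this happens for arbitrarily large $i$. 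That is exactly why the statement allows minimal components. Nothing downstream needs a nonempty contact set: an untouched component is simply minimal, as your last paragraph already notes. Removing Step 1 leaves a complete proof along the paper's lines.
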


Since $A(K)$ is non-decreasing in $K$ and bounded by $A_\infty$, the first hypothesis gives $A(K)=A_\infty$ for every compact $K\supseteq K_0$. The hypotheses hold automatically when $A_\infty=0$. In that case $NT$ is vacuous, the infimum $0$ is never attained, and Proposition \ref{propAinfty0} gives the stronger conclusion with strictly mean concave boundaries. The new case is $0<A_\infty<\infty$, with the value $A_\infty$ never attained, as for an end whose cross-sections decrease to a positive limit.

\begin{proof}
We use the band minimisers from the proof of Proposition \ref{propAinftyinfty}. Fix a compact domain $K\supset K_0$ with smooth boundary, take compact domains $V_i$ exhausting $M$ with $K\subset\mathring{V}_i$, and let $\xi_i$ be an area minimiser in $\mathcal{C}(V_i\setminus\mathring{K},\partial V_i,\partial K)$. Any compact domain with smooth boundary containing $K$ in its interior lies in $V_i$ for $i$ large, so $|\xi_i|\to A(K)=A_\infty$.

We claim that $\xi_i$ does not touch $\partial K$ for $i$ large. Suppose instead that a subsequence touches $\partial K$. Then the subsequence converges to a limit hypersurface $\xi_\infty$ that still touches $\partial K$. The limit bounds a set $\Omega_\infty\supseteq K$ with $|\xi_\infty|\le A(K)=A_\infty$, and $\Omega_\infty$ minimises area among compact perturbations containing $K$. We rule out the three possibilities for this limit, as in Proposition \ref{propAinftyinfty}.
\begin{itemize}
\item If $\xi_\infty$ is unbounded, then $\Omega_\infty$ is a tentacle for $K$, since $|\xi_\infty|\le A_\infty<\infty$. This contradicts condition $NT$.
\item If $\xi_\infty$ is bounded and separates $K$ from the end, then $\Omega_\infty$ is a bounded set containing $K$ with $|\partial\Omega_\infty|\le A(K)$. So the infimum $A(K)$ is attained, contrary to the hypothesis.
\item If $\xi_\infty$ is bounded and does not separate $K$ from the end, then, since $M$ has one end, it bounds a compact set $W$ disjoint from $\mathring{K}$. Filling in $W$ contradicts the minimality of the limit, exactly as before.
\end{itemize}

Hence, for $i$ large, $\xi_i$ touches at most the outer wall $\partial V_i$. Its mean curvature vanishes on the free part and is non-positive almost everywhere on $\xi_i\cap\partial V_i$. After a short mean curvature flow, we obtain a smooth weakly mean concave hypersurface each of whose components is either strictly mean concave or minimal. The hypothesis holds for every compact set containing $K_0$. So we can iterate, choosing the successive compact sets to exhaust $M$, and obtain the desired exhaustion.
\end{proof}

\begin{remark}
As before, the flat hypothesis cannot be dropped. For an end whose cross-sections increase to a limit, $A(K)<A_\infty$ for every $K$. By comparison with the cross-sections at an outermost point, every far separating hypersurface then has a point of positive mean curvature.
\end{remark}

The following proposition deals with the remaining case, which is standard in the literature, but we add it here for completeness.

\begin{proposition}\label{prop:attained}
Let $(M,g)$ be a complete Riemannian manifold of dimension $3\le n\le 7$. Suppose that there is a compact set $K_0$ with $A(K_0)=A_\infty$. Suppose also that every compact set is contained in a compact set $K$ for which the infimum $A(K)$ is attained, that is, there is a bounded set $\Omega\supseteq K$ of finite perimeter with $|\partial\Omega|=A(K)$. Then there exists a compact exhaustion $\{U_i\}_{i=1}^\infty$ of $M$ such that each $\partial U_i$ is a smooth stable minimal hypersurface. Moreover, each $U_i$ minimises perimeter among bounded sets containing $K_0$.
\end{proposition}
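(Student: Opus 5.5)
The plan is to take the minimisers given by the hypothesis and show directly that they are stable minimal hypersurfaces lying well away from the sets they enclose. Since $A$ is non-decreasing and bounded by $A_\infty$, the hypothesis $A(K_0)=A_\infty$ gives $A(K)=A_\infty$ for every compact $K\supseteq K_0$. Fix such a $K$ with $A(K)$ attained, and let $\Omega\supseteq K$ be a bounded set of finite perimeter with $|\partial\Omega|=A(K)=A_\infty$. I will show that $\Omega$ minimises perimeter among bounded sets containing $K_0$, not just among those containing $K$. Indeed, any bounded $\Omega'\supseteq K_0$ of finite perimeter can be approximated by compact domains with smooth boundary containing $K_0$ in their interior, with perimeters converging. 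The approximation is by smoothing after a slight enlargement, using that $K_0$ may be replaced by a slightly larger compact set without changing $A$. Hence $|\partial\Omega'|\ge A(K_0)=A_\infty=|\partial\Omega|$.

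The key step is regularity away from $K$. Because $\Omega$ minimises among bounded sets containing $K_0$, it is a one-sided minimiser near $\partial K$ only if $\partial\Omega$ touches $K$. To avoid this, I would replace $K$ by the compact set $K_0$ itself for the minimisation, and use $K$ only to force $\Omega$ to be large. Then $\partial\Omega\cap\partial K_0$ is the only obstacle contact. To push $\partial\Omega$ off $K_0$ as well, I would choose a slightly larger $\tilde K_0\supset K_0$ with $K_0\subset\mathring{\tilde K}_0$. Since $A(\tilde K_0)=A_\infty$, the same argument shows $\Omega$ minimises among sets containing $\tilde K_0$ as long as $\Omega\supseteq\tilde K_0$, which holds once $K\supseteq\tilde K_0$. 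On the open set $M\setminus K_0$, $\Omega$ is then an unconstrained perimeter minimiser under compactly supported perturbations inside $M\setminus K_0$.

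This is the step I expect to be the main obstacle: ensuring that local perturbations near $\partial\Omega\cap\partial K$ remain admissible. They do remain admissible, because admissibility only requires containing $K_0$, not $K$. Since $3\le n\le7$, standard regularity theory for perimeter minimisers then makes $\partial\Omega$ a smooth embedded hypersurface in $M\setminus K_0$. First variation gives $H=0$, and second variation gives stability. Also, $\partial\Omega$ cannot meet $K_0$ except possibly tangentially from outside. Replacing $K_0$ by $\tilde K_0$ as above, $\partial\Omega\subset M\setminus\tilde K_0$ is disjoint from $K_0$. I would then take $U=\overline\Omega$, after discarding measure-zero ambiguity by choosing the closed representative. $U$ is compact because $\Omega$ is bounded, and $\partial U$ is smooth, stable and minimal.

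Finally I iterate. Starting from a compact exhaustion by balls $B_{r_i}$, I choose inductively compact $K_i\supseteq U_{i-1}\cup B_{r_i}\cup\tilde K_0$, with a margin so that $U_{i-1}\subset\mathring K_i$, and such that $A(K_i)$ is attained. This is possible by hypothesis. I let $U_i$ be the corresponding minimiser. Then $U_{i-1}\subset\mathring U_i$ and $\bigcup U_i=M$. Each $U_i$ minimises perimeter among bounded sets containing $K_0$, which gives the final claim. Note that no single-end or $NT$ assumption was used, consistent with the remark in Theorem \ref{theoremtrichotomy}.
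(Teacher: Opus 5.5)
Your proposal is correct and follows essentially the same route as the paper. The minimiser for $K$ also minimises among bounded sets containing $K_0$ because $A(K)=A_\infty=A(K_0)$, and choosing $K$ to contain a neighbourhood $\tilde K_0$ of $K_0$ keeps $\partial\Omega$ at positive distance from $K_0$, so $\Omega$ is locally perimeter minimising and $n\le 7$ regularity applies, followed by the same inductive choice of $K_i$. One small slip: $\Omega\supseteq\tilde K_0$ only gives $\partial\Omega\subset M\setminus\mathring{\tilde K}_0$, not $M\setminus\tilde K_0$, but that still keeps $\partial\Omega$ a positive distance from $K_0$, which is all the argument needs.
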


Unlike Propositions \ref{prop:strict} and \ref{prop:flat}, this needs neither condition $NT$ nor a single end.

\begin{proof}
Let $K$ be a compact set containing a neighbourhood of $K_0$ for which $A(K)$ is attained by some $\Omega$. Since $A$ is non-decreasing and bounded by $A_\infty$, we have $A(K)=A_\infty=A(K_0)$. Since $\Omega\supseteq K_0$, it follows that $\Omega$ also minimises perimeter among bounded sets containing $K_0$. Moreover, $K$ contains a neighbourhood of $K_0$, so $\partial\Omega$ stays at a positive distance $\rho$ from $K_0$. Hence perturbations of $\Omega$ supported in balls of radius less than $\rho$ centred on $\partial\Omega$ still contain $K_0$, and $\Omega$ is locally perimeter minimising in $M$. As $3\le n\le 7$, $\partial\Omega$ is a smooth stable minimal hypersurface.

To build the exhaustion, choose inductively compact sets $K_i$ containing $B_i$ and neighbourhoods of $K_0$ and $U_{i-1}$, for which $A(K_i)$ is attained. This is possible by hypothesis. Let $U_i$ be the closure of a corresponding minimiser. Then $U_{i-1}\subset\mathring{U}_i$ and $B_i\subset U_i$, so $\{U_i\}$ is a compact exhaustion of $M$. By the previous paragraph, each $\partial U_i$ is a smooth stable minimal hypersurface.
\end{proof}

We can now proceed with the proof of the existence of the trichotomy.

\subsection*{Proof of Theorem \ref{theoremtrichotomy}}
Since $A(K)$ is non-decreasing in $K$, with limit $A_\infty$ along the balls $B_r$, either $A(K)<A_\infty$ for every compact $K$, or $A(K_0)=A_\infty$ for some compact $K_0$. In the second situation there are two possibilities. Either $A(K)$ is attained for arbitrarily large compact sets $K$, which is case (3). Or there is a compact set $C$ such that $A(K)$ is not attained for any compact $K\supseteq C$. Since $A(K_0\cup C)=A_\infty$ by monotonicity, case (2) then holds with $K_0\cup C$ in place of $K_0$. The three cases are mutually exclusive.

In case (1), Proposition \ref{prop:strict} gives the exhaustion; when $A_\infty=\infty$ this is Proposition \ref{propAinftyinfty}. In case (2), Proposition \ref{prop:flat} gives it, and in case (3), Proposition \ref{prop:attained}. Finally, if $A_\infty=0$, the value $0$ is never attained, so case (2) occurs, and Proposition \ref{propAinfty0} gives strictly mean concave boundaries.

\qed

The $NT$ condition is required when $A_\infty>0$ to rule out the infimum being always achieved by a hypersurface which touches both boundaries of each band. Taking the limit would then give an unbounded hypersurface going into the end, with area lesser than the boundary of any compact domain containing the initial set $K$. This is ruled out naturally in many geometric settings, like in manifolds having bounded geometry. In particular, under bounded geometry conditions, unbounded hypersurfaces which are area minimisers must have infinite area, which gives both the thickness at infinity condition and the $NT$ condition. We prove this here for completeness.

\subsection*{Proof of Corollary \ref{cor:boundedgeometry}}
We show that a manifold with bounded geometry satisfies the $NT$ condition. The result then follows from Theorem \ref{theoremtrichotomy}. 
Let $K\subset M$ be a compact domain. Let $\Omega$ satisfying $K\subset\Omega\subset M$ be a set such that $\partial \Omega$ is smooth and unbounded, $\lvert \partial \Omega\rvert$ is finite, and $\partial \Omega$ minimises area among compact perturbations that still contain $K$. Then $\Omega$ violates condition $NT$.

Consider a compact neighbourhood $U$ of $K$. Any perturbation of $\Omega$ supported in $M\setminus U$ has boundary area at least $\lvert \partial \Omega \rvert$, since such a perturbation still contains $K$. Therefore, $\partial \Omega$ is locally perimeter minimising in $M\setminus U$, and hence is a smooth minimal hypersurface in $M\setminus U$.

Since the manifold has bounded geometry, the monotonicity formula holds with some constants depending only on dimension $n$, injectivity radius lower bound $\iota$, and sectional curvature bound $\kappa_0$. That is, there exist $r_1>0$ and $c>0$ such that for any minimal hypersurface $\Sigma$ in $M$, $\lvert \Sigma\cap B(x,r_1)\rvert\geq cr_1^{n-1}$ for any $x\in \Sigma$ such that $\Sigma$ does not have any boundary in $B(x,r_1)$.

Since $\partial \Omega$ is unbounded, we can choose an infinite sequence of points $\{x_j\}_{j=1}^\infty$, $x_j \in (\partial \Omega\setminus U)$ such that $d(x_j,U)> r_1$ for all $j$, and $d(x_i,x_j)\geq 2r_1$ for all $i\neq j$. Then, $$\lvert \partial \Omega\rvert \geq \Sigma_{j=1}^\infty \lvert \partial \Omega\cap B(x_j,r_1)\rvert\geq \Sigma_{j=1}^\infty cr_1^{n-1}=\infty.$$
This contradicts the finiteness of $\partial\Omega$, and hence such an $\Omega$ cannot exist. Therefore, a manifold with a single end, bounded geometry, and dimension $3\leq n \leq 7$ always satisfies the criteria for Theorem \ref{theoremtrichotomy}, and hence admits a trichotomy of exhaustions.
\qed

Finally, we give an example to show that the $NT$ condition is weaker than the thick at infinity condition.

\subsection*{Example}
Let $n\ge3$. Fix smooth even functions $q,\eta$ on $[-2,2]$ with the following properties:
\begin{itemize}
\item $q(s)=s^2$ and $\eta(s)=1$ for $|s|\le\frac12$;
\item $q(s)=\sin^2(2-|s|)$ near $s=\pm2$, and $q>0$ on $(-2,2)\setminus\{0\}$;
\item $0\le\eta\le1$ and $\{\eta>0\}=(-1,1)$.
\end{itemize}
Fix a smooth function $\varepsilon\colon[0,\infty)\to(0,1]$ which is constant on $[0,1]$, strictly decreasing on $[1,\infty)$, and equal to $t^{-2}$ for $t\ge2$. Set $f(t,s)=(\varepsilon(t)^2\eta(s)+q(s))^{1/2}$. Write $S^{n-1}$ as $[-2,2]\times S^{n-2}$ with $\{\pm2\}\times S^{n-2}$ collapsed to points. Then the metric
\begin{align}
g=dt^2+ds^2+f(t,s)^2g_{S^{n-2}}
\end{align}
on $[0,\infty)\times S^{n-1}$ is smooth, and it is a product for $t\le1$. Close it off at $t=0$ by a compact piece with a product collar to get a complete manifold $M$ with one end. Let $K_0$ be the compact piece together with $\{t\le1\}$, and write $S_t=\{t\}\times S^{n-1}$ and $W=\{s=0\}$.

Then $M$ satisfies $NT$. Moreover, for every compact $K\supseteq K_0$ we have 
\begin{align}
A(K)=A_\infty=|S^{n-2}|\int_{-2}^2\sqrt{q}^{\,n-2}\,ds,
\end{align}
and this value is not attained. So $M$ falls under case (2) of Theorem \ref{theoremtrichotomy}, and that case is realised by the slices $S_t$, whose mean curvature $(n-2)\,\partial_tf/f$ is non-positive. However, $M$ is not thick at infinity: $W\cap\{t\ge1\}$ is a non-compact minimal hypersurface of finite area with compact boundary, and it is locally area minimising at infinity.

Indeed, $W$ is totally geodesic, being the fixed set of the isometry $s\mapsto-s$. Its area beyond $t=1$ is $|S^{n-2}|\int_1^\infty\varepsilon^{n-2}\,dt<\infty$. It is area minimising near infinity because the projection $(t,s,\theta)\mapsto(t,0,\theta)$ is $1$-Lipschitz on $\{|s|<\frac12\}$ and fixes $W$.

For $NT$ and the profile, let $\Omega\supseteq K$ minimise perimeter among compact perturbations containing $K$. Comparing $\Omega$ with $\Omega\cup\{t<b\}$ gives $|\partial\Omega\cap\{t<b\}|\le|S_b\setminus\Omega|$. The vertical projection onto $S_b$ is $1$-Lipschitz, since $f$ is non-increasing in $t$, and it gives the reverse inequality. In the equality case, $\partial\Omega\cap\{t<b\}$ is horizontal and avoids $\{|s|<1\}$, where $f$ is strictly decreasing in $t$. This forces every slice to be empty or full, and hence $\Omega\supseteq\{t<b\}$. So $\Omega=M$: there are no tentacles, and $A(K)$ is not attained. The same projection gives $|\partial\Omega|\ge|S_b|$ for bounded $\Omega\supseteq K$, and hence $A(K)=\lim_{b\to\infty}|S_b|$.

\section{Existence of an exhaustion with small integral of mean curvature}\label{section:smallcurv}

In this section we construct exhaustions whose boundaries are mean convex, or mean concave, up to an error which is small in $L^1$. We first show that one-sided bounds on the mean curvature of separating hypersurfaces give exhaustions $\{U_i\}$ with $\int_{\partial U_i}H^-\to0$, or with $\int_{\partial U_i}H^+\to0$. We then show that these curvature bounds hold in general settings, with constants controlled by the growth of the area profile and by the rate at which $M$ collapses. Unlike the results of Section 2, Theorems \ref{thm:Hminus}--\ref{thm:Habs} below do not need condition $NT$.

 As in Section \ref{sectiontrichotomy}, for compact domains $K\subset\mathring V$ with smooth boundaries we consider the band $X=V\setminus\mathring K$ and the class $\mathcal{C}(X,\partial V,\partial K)$. We write $a_K(V)$ for the least area of a hypersurface in $\mathcal{C}(X,\partial V,\partial K)$.

For a smooth positive function $w$ on $M$ and a hypersurface $\Sigma$, we write $|\Sigma|_w=\int_\Sigma w\,d\mathcal H^{n-1}$, and we define $A_w(K)$ and $a^w_K(V)$ as above, with $|\cdot|_w$ in place of area. Since $|\cdot|_w$ is the area functional of the conformal metric $w^{2/(n-1)}g$, minimisers of $|\cdot|_w$ in $\mathcal{C}(X,\partial V,\partial K)$ exist and are $C^{1,1}$ hypersurfaces. For such a minimiser $\xi$, we call $\xi\cap\mathring X$ the free part of $\xi$, and $\xi\cap\partial K$ and $\xi\cap\partial V$ its inner and outer contact sets. We let $\nu$ be the unit normal of $\xi$ pointing away from $K$. From the first variation, we know that almost everywhere on $\xi$, 
\begin{align}
    H=-\partial_\nu\log w\ \text{ on the free part},\qquad
H\ge-\partial_\nu\log w\ \text{ on }\xi\cap\partial K,\qquad
H\le-\partial_\nu\log w\ \text{ on }\xi\cap\partial V .
\end{align}
Moreover, $H=H_{\partial K}$ almost everywhere on $\xi\cap\partial K$, and $H=H_{\partial V}$ almost everywhere on $\xi\cap\partial V$.

In particular, if $w$ is constant near $\partial K$ then $H\ge0$ on the inner contact set, and if $w$ is constant near $\partial V$ then $H\le0$ on the outer contact set, as in the previous section.

\medskip\noindent\emph{Smoothing.} The minimisers above are only $C^{1,1}$, and we make them smooth by running the mean curvature flow for a short time, as in Section 2. Along the flow, $\partial_tH=\Delta H+(|A|^2+\text{Ric}(\nu,\nu))H$ and $\partial_t\,d\mu=-H^2\,d\mu$. Hence
\begin{align}
\frac{d}{dt}\int_{\xi_t}H^\pm\ \le\ C\int_{\xi_t}H^\pm ,
\end{align}
and an upper bound $H\le b$ with $b\ge0$, or a lower bound $H\ge-b$, changes at most by the factor $e^{Ct}$. Here $C$ depends on a bound for $|A|$ along the flow and on the ambient curvature near $\xi$. These estimates pass to the flow starting from a $C^{1,1}$ hypersurface by approximating it with smooth ones. Since $\xi_t\to\xi$ in $C^1$, in each construction below we may replace $\xi$ by a smooth hypersurface arbitrarily $C^1$-close to it, at an arbitrarily small cost in $\int H^\pm$ and in one-sided bounds on $H$.

\subsection{Exhaustions with small $\int H^-$ or $\int H^+$}\label{subsec:smallcurv}

\begin{defn}
Let $\Lambda\ge0$. We say that $(M,g)$ satisfies condition ($H^-_\Lambda$) if every compact subset of $M$ is contained in a compact domain $V$ with smooth boundary such that $H_{\partial V}\ge-\Lambda$. Similarly, $(M,g)$ satisfies condition ($H^+_\Lambda$) if every compact subset of $M$ is contained in a compact domain $K$ with smooth boundary such that $H_{\partial K}\le\Lambda$.
\end{defn}

Note that here we assume the existence of isolated hypersurfaces far enough into the end satisfying the curvature bound for some $\Lambda$. Hence, when the condition defined above is not met, the manifold must have the infimum of mean curvature going to $-\infty$ (resp. supremum going to $+\infty$) along any sequence of hypersurfaces going into the end and bounding an exhaustion of the manifold.

Our next result shows that the existence of some hypersurfaces far off with any uniform bound on mean curvature is enough to control the $L^1$ error of an exhaustion as desired. Note that here we upgrade an exhaustion with an arbitrarily large but uniform negative lower bound to an exhaustion with almost weakly mean convex boundaries.

\begin{theorem}\label{thm:Hminus}
Let $(M,g)$ be a complete Riemannian manifold of dimension $3\le n\le7$, which satisfies condition ($H^-_\Lambda$) for some $\Lambda>0$. Then there exists a compact exhaustion $\{U_i\}_{i=1}^\infty$ of $M$ with smooth boundaries such that $\int_{\partial U_i}H^-\to0$ as $i\to\infty$.
\end{theorem}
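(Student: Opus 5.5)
The plan is to run the band-minimisation of Section \ref{sectiontrichotomy} with a weight that makes touching the outer wall expensive. Then the only place where $H$ can be very negative (the outer contact set, where we only know $H=H_{\partial V}\ge-\Lambda$) has small area. On the free part we pay only a small, controlled amount of negative curvature.

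Fix a compact domain $K$ with smooth boundary, and parameters $\varepsilon>0$ and $L>0$. By condition ($H^-_\Lambda$) choose a compact domain $V$ with smooth boundary, $H_{\partial V}\ge-\Lambda$, containing the closed $(L+1)$-neighbourhood of $K$. Let $\psi$ be a smoothing of $d(\cdot,K)$ with $|\nabla\psi|\le 2$. Set $w=\exp(\varepsilon\,\chi(\psi))$, where $\chi$ is a smooth nondecreasing function with $\chi=0$ near $0$, $\chi=L$ on $[L,\infty)$, and $0\le\chi'\le 1$. Then:
\begin{itemize}
\item $w\equiv1$ near $\partial K$ and $w\equiv W:=e^{\varepsilon L}$ near $\partial V$;
\item $1\le w\le W$ and $|\nabla\log w|\le2\varepsilon$.
\end{itemize}
Let $\xi$ be a $|\cdot|_w$-minimiser in $\mathcal C(V\setminus\mathring K,\partial V,\partial K)$. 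The first variation relations recorded before this subsection give the following.
\begin{itemize}
\item On the free part, $H=-\partial_\nu\log w\ge-2\varepsilon$.
\item On the inner contact set, $H\ge0$, since $w$ is constant near $\partial K$.
\item On the outer contact set, $H=H_{\partial V}\ge-\Lambda$.
\end{itemize}
Comparing with the competitor $\partial K$ (where $w=1$) gives $|\xi|\le|\xi|_w\le|\partial K|$. It also gives $W\,|\xi\cap\partial V|\le|\xi|_w\le|\partial K|$. Hence
\begin{align}
\int_\xi H^-\le 2\varepsilon|\partial K|+\Lambda e^{-\varepsilon L}|\partial K| .
\end{align}
Given $\delta>0$, choose $\varepsilon=\delta/(4|\partial K|)$, and then $L$ so large that $\Lambda e^{-\varepsilon L}|\partial K|<\delta/2$. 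This makes $\int_\xi H^-<\delta$.

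Next we smooth. By the discussion of short-time mean curvature flow above, $\xi$ may be replaced by a smooth hypersurface $C^1$-close to it, with $\int H^-<2\delta$. This hypersurface still separates $K$ from $M\setminus V$, so it bounds a compact domain $U\supset K$ (taking the component containing $K$ and filling in; this only discards pieces and does not increase $\int H^-$). Since $\xi$ may touch $\partial K$, I would first replace $K$ by a slightly larger compact domain $K^+$ containing the previous domain in its interior. This guarantees $K\subset\mathring U$.

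Finally we iterate. Choose compact domains $K_i$ with $B_i\cup U_{i-1}\subset\mathring K_i$, apply the construction with $\delta_i=1/i$, and obtain $U_i\supset K_i$. This gives a compact exhaustion with $\int_{\partial U_i}H^-\le 2/i\to0$.

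The main point needing care is the weighted minimiser. It exists and is $C^{1,1}$ in the band, as stated before this subsection (it is a minimiser for the conformal metric $w^{2/(n-1)}g$, dimension $\le7$). We also need the contact-set inequalities to hold with the weight constant near both walls. I expect the remaining delicate step to be the smoothing, specifically ensuring that the approximating flow does not destroy the separation property or enlarge $\int H^-$ by more than $\delta$. This rests on the $C^1$ convergence and the estimate $\frac{d}{dt}\int H^-\le C\int H^-$ quoted above. Note that no $NT$ or end assumption is needed, since the enclosed region is always compact.
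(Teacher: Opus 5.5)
Your proof is correct. Its overall framework matches the paper's: a weighted band minimiser whose outer contact set has small area, a free-part error of order $|\nabla\log w|\,|\xi|$, then smoothing and iteration. It differs in the step that carries the argument, namely how $|\xi\cap\partial V|$ is made small.

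\textbf{The paper's route.} It keeps the weight uniformly close to $1$, taking $w=1+k\chi(\tilde\rho)$ supported in a collar of width about $3$ around $\partial V$. It then gets $|\xi\cap\partial V|<\delta/k$ from near-optimality of the area profile:
\begin{itemize}
\item it first fixes $R$ with $a_K(\cdot)<A(K)+\delta$ on all compact domains containing $B_R$, and takes $V\supset B_{R+5}$;
\item it compares $\xi$ with an unweighted minimiser $\xi^*$ in a shrunken band $V'\setminus\mathring K$ on which $w\equiv1$;
\item this traps $A(K)+k|\xi\cap\partial V|\le|\xi|_w\le|\xi^*|<A(K)+\delta$.
\end{itemize}

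\textbf{Your route.} You let $w$ grow exponentially at the small rate $\varepsilon$ over a collar of length $L$, so the penalty factor on $\partial V$ is $e^{\varepsilon L}$. You then compare only with $\partial K$. This removes $A(K)$, the radius $R$ and the auxiliary minimiser $\xi^*$ altogether. The price is that $V$ must lie at distance of order $\varepsilon^{-1}\log(\Lambda|\partial K|/\delta)$ from $K$, which $(H^-_\Lambda)$ supplies for free.

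\textbf{What the paper's version buys.} Its weight stays within a factor $1+k$ of $1$. That is irrelevant here, since only $\nabla\log w$ enters the first variation. It does matter in Theorem \ref{thm:cconvex}, where the free part satisfies $H=c/w-\partial_\nu\log w$. Wherever your weight is large, $H$ would be close to $0$ rather than to $c$, so your scheme would not transfer verbatim to the $F_c$ problem.

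\textbf{A cosmetic fix.} A nondecreasing $\chi$ with $\chi=0$ near $0$ and $0\le\chi'\le1$ cannot equal $L$ on $[L,\infty)$, and $\psi$ only approximates $d(\cdot,K)$. Shift the plateau, for instance to $\chi=L$ on $[L+2,\infty)$, and let $V$ contain a correspondingly larger neighbourhood of $K$.

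In fact you only use $w\ge1$, $w=1$ on $\partial K$, $w\ge e^{\varepsilon L}$ on $\partial V$, and $|\nabla\log w|\le2\varepsilon$. Constancy of $w$ near the walls is not needed: $H\ge-2\varepsilon$ already holds on the inner contact set, and $H=H_{\partial V}\ge-\Lambda$ on the outer one.
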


\begin{proof}
Let $K\subset M$ be a compact domain  with smooth boundary, and let $p$ be a point in $K$. We show that for every such compact domain, and every $\varepsilon\in(0,1]$, there is a compact domain $U$ with smooth boundary such that $\int_{\partial U}H^-<\varepsilon$, and which contains any prescribed compact subset of $\mathring K$.

By assumption, there exists some $\Lambda>0$ such that $K$ is contained in the interior of a compact domain $V$ such that $H_{\partial V}\geq -\Lambda$. Let us fix the constants $$k=\varepsilon/(4(1+|\partial K|)),$$ and $$\delta=k\varepsilon/(2\Lambda).$$ Now, we choose some $R>0$ with $K\subset \mathring{B_R}$ such that $a_K(V)<A(K)+\delta$ for every compact domain $V\supset B_R$. By condition $H^-_\Lambda$, we choose a compact domain $V\supset B_{R+5}$ with smooth boundary and $H_{\partial V}\ge-\Lambda$. Let $X=V\setminus\mathring K$.

We now penalise contact with $\partial V$. Let $\tilde\rho$ be a smooth function on $M$ with $|\tilde\rho-d(\cdot,\partial V)|<1$ and $|\nabla\tilde\rho|\le2$. Let $\chi:\mathbb R\to[0,1]$ be smooth and non-increasing, such that
\begin{itemize}
    \item $\chi=1$ on $(-\infty,1]$;
    \item $\chi=0$ on $[3,\infty)$;
    \item $|\chi'|\le1$.
\end{itemize}

Define a function $w$ as
\begin{align}
w=1+k\chi(\tilde\rho),
\end{align}
with the constant $k$ defined earlier. Then $w\ge1$ everywhere, and on $\partial V$ we have $\chi = 1$ and hence $w=1+k$. Also, $w=1$ for any point at distance at least $4$ from $\partial V$ (in particular near $K$ and on $B_R$). Additionally, $|\nabla\log w|\le2k$.

Let $\xi$ be a minimiser of $|\cdot|_w$ in $\mathcal{C}(X,\partial V,\partial K)$. To bound the outer contact set, we compare $\xi$ with a hypersurface which avoids the region where $w>1$. Choose a regular value $s\in(3,4)$ of $\tilde\rho$, and let $V'=\{x\in V:\tilde\rho(x)\ge s\}$. Then $V'$ is a compact domain with smooth boundary, $K\subset B_R\subset V'$, and $w=1$ on $V'$. Let $\xi^*$ be an (unweighted) area minimiser in $\mathcal{C}(V'\setminus\mathring K,\partial V',\partial K)$. Then $\xi^*\in\mathcal{C}(X,\partial V,\partial K)$, and since $w=1$ on $V'$, $|\xi^*|_w=|\xi^*|$. By the choice of $R$,
\begin{align}\label{eqxistarless}
    |\xi^*|_w=|\xi^*|=a_K(V')<A(K)+\delta\le a_K(V)+\delta .
\end{align}
On the other hand, since $w\ge1$ everywhere and $w=1+k$ on $\partial V$,
\begin{align}\label{eqximore}
|\xi|_w\ \ge\ |\xi|+k\,|\xi\cap\partial V|\ \ge\ a_K(V)+k\,|\xi\cap\partial V| .
\end{align}
Since $\xi$ minimises $|\cdot|_w$, and $\xi^*$ is a competitor for $\xi$ in $\mathcal{C}(X,\partial V,\partial K)$, we have $|\xi|_w\le|\xi^*|_w$. Therefore, from Equation \ref{eqxistarless} and Equation \ref{eqximore}, 
\begin{align}
|\xi\cap\partial V|<\delta/k=\varepsilon/(2\Lambda).
\end{align}
We also know $|\xi|\le|\xi|_w\le|\xi^*|\le|\partial K|$, since $\partial K$ is a competitor for $\xi^*$.

Now, we compute the mean curvature bounds for $\xi$. On the free part, $|H_{\xi}|=|\partial_\nu\log w|\le2k$. $w$ is a constant function in a neighbourhood of $\partial K$, so on the inner contact set $\partial K \cap \xi$, we have $H\ge0$. On the outer contact set, $H=H_{\partial V}\ge-\Lambda$. Hence,
\begin{align}
\int_\xi H^-\ \le\ 2k\,|\xi|+\Lambda\,|\xi\cap\partial V|\ <\ \frac{\varepsilon\,|\partial K|}{2(1+|\partial K|)}+\frac\varepsilon2\ <\ \varepsilon .
\end{align}
Running the mean curvature flow for a short time, we obtain a smooth hypersurface, $C^1$-close to $\xi$, with $\int H^-<\varepsilon$. It bounds a compact domain $U$ which contains any prescribed compact subset of $\mathring K$.

Now, we proceed iteratively by choosing a sequence of compact domains $\{K_i\}$ such that the minimiser $\xi_{i-1}$ from the previous iteration lies in the interior of $K_i$, and the $K_i$ exhaust $M$. On the $i^{\text{th}}$ iteration, we ensure $\int_{\xi_i} H^-<\varepsilon/i$ by choosing the constants as $k=\varepsilon/(4i(1+|\partial K_i|))$ and $\delta = k\varepsilon/(2i\Lambda)$. Then the $\xi_i$ bound compact domains $U_i$ such that $\{U_i\}$ is an exhaustion of $M$, and $\int_{\partial U_i} H^-\to 0$.
\end{proof}

Note that the integral $\int H^-$ above is computed in the original metric $g$; the weight $w$ only enters the variational problem.

A similar result holds for the existence of almost weakly mean concave boundary exhaustion.

\begin{theorem}\label{thm:Hplus}
Let $(M,g)$ be a complete Riemannian manifold of dimension $3\le n\le7$ with $A_\infty<\infty$, which satisfies condition ($H^+_\Lambda$) for some $\Lambda>0$. Then there exists a compact exhaustion $\{U_i\}_{i=1}^\infty$ of $M$ with smooth boundaries such that $\int_{\partial U_i}H^+\to0$ as $i\to\infty$.
\end{theorem}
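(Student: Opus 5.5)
The plan is to mirror the proof of Theorem \ref{thm:Hminus} with the roles of the two walls exchanged. Here the wall with controlled curvature is the \emph{inner} wall $\partial K$, where $H_{\partial K}\le\Lambda$, so it is contact with $\partial K$ that must be penalised. The finiteness of $A_\infty$ replaces the role played by $A(K)$ in the earlier proof: it makes the area of separating hypersurfaces almost constant far out. Since the outer wall $\partial V$ is chosen with $w$ constant near it, the first variation gives $H\le0$ on the outer contact set, which is harmless for $\int H^+$.

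Fix $\varepsilon\in(0,1]$ and a prescribed compact set $P$. I choose constants in the following order.
\begin{enumerate}
\item Set $k=\varepsilon/(4(1+A_\infty))$ and $\delta=k\varepsilon/(4\Lambda)$.
\item Since $A(B_r)\to A_\infty<\infty$, choose a compact set $K_1\supset P$ with $A(K_1)>A_\infty-\delta$, and $R$ with $K_1\subset B_R$.
\item By ($H^+_\Lambda$), choose a compact domain $K\supset B_{R+5}$ with smooth boundary and $H_{\partial K}\le\Lambda$.
\item Let $\tilde\rho$ be a smoothing of $d(\cdot,\partial K)$ with $|\nabla\tilde\rho|\le2$, and set $w=1+k\chi(\tilde\rho)$ with $\chi$ exactly as before. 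Then $w=1+k$ on $\partial K$, $w=1$ at distance at least $4$ from $\partial K$, and $|\nabla\log w|\le2k$.
\item Let $K'=\{\tilde\rho\le s\}\cup K$ for a regular value $s\in(3,4)$. On $M\setminus\mathring K'$ we have $w=1$.
\item Choose a compact domain $V\supset K'$ so large that $a_{K'}(V)<A(K')+\delta\le A_\infty+\delta$. This is possible because any smooth domain containing $K'$ eventually lies in $V$.
\end{enumerate}

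Now let $\xi$ minimise $|\cdot|_w$ in $\mathcal C(V\setminus\mathring K,\partial V,\partial K)$. A competitor is an unweighted minimiser $\xi^*$ in $\mathcal C(V\setminus\mathring K',\partial V,\partial K')$. It lies where $w=1$, so
\begin{align}
|\xi^*|_w=|\xi^*|<A_\infty+\delta .
\end{align}
On the other hand, $\xi$ separates $K\supset K_1$ from $\partial V$, so $|\xi|\ge A(K_1)>A_\infty-\delta$. This is the step where I need to check that a separating hypersurface in the band bounds a domain containing $K_1$ in its interior; it does, after a slight push if it touches $\partial K$, since $K_1\subset\mathring K$. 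Combining with $w\ge1$ and $w=1+k$ on $\partial K$,
\begin{align}
A_\infty-\delta+k\,|\xi\cap\partial K|\le|\xi|_w\le|\xi^*|_w<A_\infty+\delta .
\end{align}
Hence $|\xi\cap\partial K|<2\delta/k=\varepsilon/(2\Lambda)$, and also $|\xi|<A_\infty+\delta\le A_\infty+1$.

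Next come the curvature bounds from the first variation formulas:
\begin{itemize}
\item on the free part, $|H|\le2k$;
\item on the outer contact set, $w$ is constant near $\partial V$, so $H\le0$;
\item on the inner contact set, $H=H_{\partial K}\le\Lambda$.
\end{itemize}
Therefore
\begin{align}
\int_\xi H^+\le 2k|\xi|+\Lambda|\xi\cap\partial K|<\frac{\varepsilon}{2}+\frac{\varepsilon}{2}=\varepsilon .
\end{align}
A short mean curvature flow, using the smoothing discussion, produces a smooth hypersurface bounding a compact domain $U\supset P$ with $\int_{\partial U}H^+<\varepsilon$.

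Finally, I iterate exactly as in Theorem \ref{thm:Hminus}. At step $i$ I use $\varepsilon/i$, and take the prescribed set to contain $B_i$ together with the previous domain $U_{i-1}$. This gives a compact exhaustion with $\int_{\partial U_i}H^+\to0$.

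The main obstacle is the lower bound $|\xi|>A_\infty-\delta$: the whole argument hinges on an almost-constant area profile. It requires $A_\infty<\infty$, so that $\delta$ is meaningful, and it requires $K_1$ to be chosen before $K$, so that the region where $w>1$ sits outside $K_1$. A secondary point is to make sure that the constants $k$ and $\delta$ depend only on $A_\infty$ and $\Lambda$, and not on the later choices of $K$ and $V$.
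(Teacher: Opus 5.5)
Your proposal is correct and follows essentially the same route as the paper: you penalise contact with the inner wall $\partial K$ by a weight $1+k$ in a collar around it, use an unweighted minimiser outside $K'$ as a competitor, and use the near-constancy of the area profile (which needs $A_\infty<\infty$) to make $|\xi\cap\partial K|$ small, with $k$ fixed before $K$ because $|\xi|<A_\infty+1$. The one slip is that the step $|\xi|<A_\infty+\delta\le A_\infty+1$ needs $\delta\le1$, which your choice $\delta=k\varepsilon/(4\Lambda)$ does not guarantee when $\Lambda$ is small; as the paper does, first replace $\Lambda$ by $\max\{\Lambda,1\}$, which is harmless since ($H^+_\Lambda$) implies ($H^+_{\Lambda'}$) for every $\Lambda'\ge\Lambda$.
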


\begin{proof}
The proof is similar to that of Theorem \ref{thm:Hminus} with the roles of $\partial K$ and $\partial V$ exchanged. The one new point is that the inner boundary must also be decoupled, and this is where $A_\infty<\infty$ is used.

We can assume $\Lambda\geq 1$, since $H_\Lambda^+$ implies $H_{\Lambda'}^+$ for any $\Lambda'\geq \Lambda$. Let $\varepsilon\in(0,1]$, set $$k=\varepsilon/(4(A_\infty+1)),$$ and set $$\delta=k\varepsilon/(2\Lambda)$$ as before. Fix a point $p\in M$. Since $A(B_r)\to A_\infty<\infty$, there is $R$ with $A_\infty-A(B_R)<\delta/2$. By condition ($H^+_\Lambda$), choose a compact domain $K\supset B_R$ with smooth boundary and $H_{\partial K}\le\Lambda$. Let $\tilde\rho$ be a smoothing of $d(\cdot,K)$, satisfying $|\tilde\rho-d(\cdot,K)|<1$ and $|\nabla\tilde\rho|\le2$. We again fix a function $\chi:\mathbb R\to[0,1]$ which is smooth and non-increasing, such that
\begin{itemize}
    \item $\chi=1$ on $(-\infty,1]$;
    \item $\chi=0$ on $[3,\infty)$;
    \item $|\chi'|\le1$.
\end{itemize} Taking $w=1+k\chi(\tilde\rho)$, choose a regular value $s\in(3,4)$ of $\tilde\rho$, and let $K'=K\cup\{\tilde\rho\le s\}$. Then $K'$ is a compact domain with smooth boundary, $w=1+k$ near $\partial K$, $w=1$ outside $\mathring K'$, and $|\nabla\log w|\le2k$. Since $B_R\subset K\subset K'$,
\begin{align}\label{eqT1one}
A(B_R)\le A(K)\le A(K')\le A_\infty,
\end{align}
and so,
\begin{align}\label{eqT1two}
A(K')-A(K)<\delta/2 .
\end{align}
Now we choose a compact domain $V$ with $K'\subset\mathring V$ and $a_{K'}(V)<A(K')+\delta/2$. Let $\xi$ be a minimiser of $|\cdot|_w$ in $\mathcal{C}(V\setminus\mathring K,\partial V,\partial K)$, and let $\xi^*$ be an (unweighted) area minimiser in $\mathcal{C}(V\setminus\mathring K',\partial V,\partial K')$. Then $\xi^*$ is a competitor for $\xi$, and it lies where $w=1$. Since $w\ge1$, $w=1+k$ on $\partial K$, and $|\xi|\ge a_K(V)\ge A(K)$, we get
\begin{align}
A(K)+k\,|\xi\cap\partial K|\ \le\ |\xi|_w\ \le\ |\xi^*|_w=a_{K'}(V)\ <\ A(K')+\delta/2\ <\ A(K)+\delta.
\end{align}
This implies, 
\begin{align}
|\xi\cap\partial K|<\delta/k=\varepsilon/(2\Lambda).
\end{align}
We also have $|\xi|\le|\xi^*|<A_\infty+1$, since $\delta<1$. This bound does not depend on $K$, which is why $k$ can be chosen before $K$.

By the first variation, we have that $H=H_{\partial K}\le\Lambda$ on the inner contact set. Since $w=1$ near $\partial V$, $H\le0$ on the outer contact set, and $|H|\le2k$ on the free part. Hence,
\begin{align}
\int_\xi H^+\ <\ \Lambda\,\frac{\delta}{k}+2k\,(A_\infty+1)\ \leq \frac{\varepsilon}{2} + 2k\,(A_\infty+1)  =\ \varepsilon .
\end{align}
Smoothing by a short mean curvature flow and iterating as in Theorem \ref{thm:Hminus} completes the proof.
\end{proof}

Combining the two conditions, we can get an exhaustion with a similar bound on the total curvature.

\begin{theorem}\label{thm:Habs}
Let $(M,g)$ be a complete Riemannian manifold of dimension $3\le n\le7$ with $A_\infty<\infty$, which satisfies conditions ($H^+_\Lambda$) and ($H^-_\Lambda$) for some $\Lambda>0$. Then there exists a compact exhaustion $\{U_i\}_{i=1}^\infty$ of $M$ with smooth boundaries such that $\int_{\partial U_i}|H|\to0$ as $i\to\infty$.
\end{theorem}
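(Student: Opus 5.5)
We combine the two penalisation arguments of Theorems \ref{thm:Hminus} and \ref{thm:Hplus} into a single weighted minimisation, in which the weight penalises contact with \emph{both} walls of the band. Fix $\varepsilon\in(0,1]$ and assume $\Lambda\ge1$. Set $k=\varepsilon/(8(A_\infty+1))$ and $\delta=k\varepsilon/(4\Lambda)$.

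Choose $R$ with $A_\infty-A(B_R)<\delta/3$. By ($H^+_\Lambda$), take a compact domain $K\supset B_R$ with smooth boundary and $H_{\partial K}\le\Lambda$. As in Theorem \ref{thm:Hplus}, form $K'=K\cup\{\tilde\rho_K\le s\}$, a smoothed $4$-neighbourhood of $K$. Then $A(K)\le A(K')\le A_\infty$, so $A(K')-A(K)<\delta/3$.

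Next choose $R'$ so large that $a_{K'}(V'')<A(K')+\delta/3$ for every compact domain $V''\supset B_{R'}$. By ($H^-_\Lambda$), take a compact domain $V\supset B_{R'+5}\cup K'$ with smooth boundary and $H_{\partial V}\ge-\Lambda$. Let $V'=\{\tilde\rho_V\ge s'\}$ be the analogous inner smoothed $4$-collar of $\partial V$. We may take $R'$ so large that $K'\subset\mathring V'$ and the two collars are disjoint.

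Define the weight
\[
w=1+k\chi(\tilde\rho_K)+k\chi(\tilde\rho_V).
\]
Then $w=1+k$ on $\partial K$ and on $\partial V$, and $w=1$ on $V'\setminus\mathring K'$. Since the supports are disjoint, $|\nabla\log w|\le2k$.

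Let $\xi$ minimise $|\cdot|_w$ in $\mathcal C(V\setminus\mathring K,\partial V,\partial K)$. Take as competitor an unweighted minimiser $\xi^*$ in $\mathcal C(V'\setminus\mathring K',\partial V',\partial K')$. It lies where $w=1$, and since $V'\supset B_{R'}$ we have $|\xi^*|=a_{K'}(V')<A(K')+\delta/3<A(K)+\delta$. On the other hand,
\[
|\xi|_w\ge|\xi|+k\bigl(|\xi\cap\partial K|+|\xi\cap\partial V|\bigr)\ge A(K)+k\bigl(|\xi\cap\partial K|+|\xi\cap\partial V|\bigr).
\]
Hence both contact sets have area less than $\delta/k=\varepsilon/(4\Lambda)$. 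Also $|\xi|\le|\xi^*|<A_\infty+1$.

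The first variation formulas give the curvature bounds:
\begin{itemize}
\item on the free part, $|H|\le2k$;
\item on the inner contact set, $-\partial_\nu\log w\le H=H_{\partial K}\le\Lambda$;
\item on the outer contact set, $-\Lambda\le H=H_{\partial V}\le-\partial_\nu\log w$.
\end{itemize}
Near the contact sets $w$ is not constant, so these bounds carry the one-sided error $|\partial_\nu\log w|\le 2k$.

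The main obstacle is that $|H|$ on a contact set is not controlled from both sides. On $\xi\cap\partial K$ we only know $H\ge-2k$ from minimisation, and $H=H_{\partial K}$, which is bounded above by $\Lambda$ but not below. Symmetrically, on $\xi\cap\partial V$ we only know $-\Lambda\le H\le 2k$.

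To handle this, note the correct splitting $|H|=H^++H^-$:
\begin{itemize}
\item On the inner contact set, $H^+\le\Lambda$ and $H^-\le2k$.
\item On the outer contact set, $H^-\le\Lambda$ and $H^+\le2k$.
\end{itemize}
Thus on both contact sets $|H|\le\Lambda+2k\le 2\Lambda$ pointwise, and no uncontrolled side remains. Consequently
\[
\int_\xi|H|\le2k|\xi|+2\Lambda\bigl(|\xi\cap\partial K|+|\xi\cap\partial V|\bigr)<\tfrac{\varepsilon}{4}+\varepsilon\cdot\tfrac{2\cdot 2}{4}\cdot\tfrac12\le\varepsilon
\]
after adjusting the constants, e.g. $\delta=k\varepsilon/(8\Lambda)$.

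We then smooth by a short mean curvature flow, which changes $\int H^\pm$ by an arbitrarily small amount, as noted in the Smoothing paragraph. Finally, we iterate with $\varepsilon/i$ and $K_i$ chosen to contain the previous domain and a ball $B_i$, exactly as in Theorem \ref{thm:Hminus}. This gives an exhaustion with $\int_{\partial U_i}|H|\to0$.
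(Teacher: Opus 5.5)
Your proposal is correct and follows essentially the same route as the paper. Both use a single weight $w=1+k\chi(\tilde\rho_K)+k\chi(\tilde\rho_V)$ penalising both walls, compare with an unweighted minimiser $\xi^*$ in the inner band $V'\setminus\mathring K'$ to make both contact sets small, and then apply the first-variation bounds with the same choice of constants (up to your $\delta/3$ splits). One small correction: since $\chi=1$ on $(-\infty,1]$ and $\tilde\rho<1$ near each wall, $w\equiv 1+k$ on neighbourhoods of $\partial K$ and $\partial V$, so you actually get $0\le H\le\Lambda$ on the inner contact set and $-\Lambda\le H\le 0$ on the outer one, as the paper does; your weaker bounds carrying the $2k$ error are still sufficient.
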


\begin{proof}
We assume $\Lambda\geq 1$ without loss of generality. Fix some point $p$ in $M$. Choose $k$, $\delta$, $R$, and $K$ as in the proof of Theorem \ref{thm:Hplus}, and let $\tilde\rho_K$ be a smoothening of the distance functions to $K$  satisfying $|\nabla\tilde\rho_K|\le2$ and $|\tilde\rho_K-d(\cdot,K)|<1$. As before, let $s\in(3,4)$ be a regular value of $\tilde\rho_K$, and set $K'=K\cup\{\tilde\rho_K\leq s\}$. Choose $R'$ such that $K'\subset \mathring{B_{R'}}$ and $a_{K'}(V)<A(K')+\delta/2$ for every compact domain $V\supset B_{R'}$. By condition ($H^-_\Lambda$), choose $V\supset B_{R'+5}$ with $H_{\partial V}\ge-\Lambda$. Let $\tilde\rho_V$ be a smoothening of the distance function to $\partial V$, also satisfying $|\nabla\tilde\rho_V|\le2$ and $|\tilde\rho_V-d(\cdot,\partial V)|<1$. For a  regular value $s'\in(3,4)$ of $\tilde \rho_V$, let $V'=V\cap\{\tilde\rho_V\geq s'\}$. Again, Equation \ref{eqT1one} and Equation \ref{eqT1two} hold exactly as before. We modify the earlier arguments to penalise both boundaries.

Fix the function \begin{align}
w=1+k\chi(\tilde\rho_K)+k\chi(\tilde\rho_V),
\end{align}
where $\chi:\mathbb R\to[0,1]$ is as in the previous theorems. In particular, $\chi=1$ on $(-\infty,1]$, $\chi=0$ on $[3,\infty)$, and $|\chi'|\le1$. Note that the two collars where $k\chi(\tilde\rho_K)>0$ and $k\chi(\tilde\rho_V)>0$ are disjoint by construction. Hence, $w\geq 1$ everywhere, and $w=1+k$ on some neighbourhoods of $\partial K $ and $\partial V$. On $V'\setminus \mathring{K'}$, $w=1$. Also, $|\nabla \log w|\leq 2k$.

We compare the minimiser $\xi$ of $|\cdot|_w$ in $\mathcal{C}(V\setminus\mathring K,\partial V,\partial K)$ with an unweighted area minimiser $\xi^*$ in $\mathcal{C}(V'\setminus\mathring K',\partial V',\partial K')$, exactly as before. Since $w=1$ on $\xi^*$, and $V'\supset B_{R'}$,
\begin{align}
    |\xi^*|_w=|\xi^*|=a_{K'}(V')<A(K')+\delta/2<A(K)+\delta.
\end{align}
Also, 
\begin{align}
    |\xi|\geq a_{K}(V)\geq A(K).
\end{align}
Therefore,
\begin{align}
    |\xi|_w & \geq  |\xi|+k(|\xi\cap \partial K|+|\xi\cap \partial V|)\\ & \geq A(K)+k(|\xi\cap \partial K|+|\xi\cap \partial V|).
\end{align}
Since $|\xi|_w\leq |\xi^*|_w$,
\begin{align}
|\xi\cap\partial K|+|\xi\cap\partial V|<\delta/k,
\end{align}
and
\begin{align}
    |\xi|<A_\infty+1 .
\end{align}
Now, $w$ is constant near both $\partial K$ and $\partial V$, and so $0\le H=H_{\partial K}\le\Lambda$ on the inner contact set, $-\Lambda\le H=H_{\partial V}\le0$ on the outer contact set, and $|H|\le2k$ on the free part. Therefore, 
\begin{align}
    \int_\xi|H|&\leq 2k|\xi|+\Lambda(|\xi\cap\partial K|+|\xi\cap\partial V|)\\
    &<\Lambda\delta/k+2k(A_\infty+1)\\&=\varepsilon.
\end{align}
Smoothing by running the mean curvature flow for a short time, we get the desired hypersurface with total mean curvature less than $\varepsilon$. We iterate as before, choosing a sequence $\{K_i\}$ of domains which exhaust $M$, and arguing similarly to get hypersurfaces $\{\xi_i\}$ such that $\int_{\xi_i}|H|<\varepsilon/i\to 0$. Since these hypersurfaces bound compact domains, we get an exhaustion of $M$ as claimed.
\end{proof}

\subsection{The curvature conditions}\label{sectioncurvatureconditions}

We now define an asymptotic invariant of a Riemannian manifold, and show that conditions ($H^\pm_\Lambda$) hold automatically, under control of this invariant along with the rate of collapse. 

\begin{defn}
For a Riemannian manifold $(M,g)$ the \textit{exponential growth rate of the area profile} is defined as
\begin{align}
h=\liminf_{r\to\infty}\frac1r\log\bigl(1+A(B_r)\bigr)\in[0,\infty].
\end{align}
\end{defn}
Note that $h=0$ when $A_\infty<\infty$.

Fix $\varepsilon_0\in(0,1]$. Take any point $p$ in $M$, and let $d$ be the distance function in $M$ from $p$. Consider a smoothening of $d$, denoted by $\rho$, with $|\rho-d|\le1$ and $|\nabla\rho|\le1+\varepsilon_0$. For $\sigma>0$ we fix the weight functions
\begin{align}
w_\pm=e^{\pm(n-1)\sigma\rho}.
\end{align}
The weights satisfy the gradient inequality
\begin{align}
|\nabla\log w_\pm|\le(1+\varepsilon_0)(n-1)\sigma.
\end{align}

The following results show the existence of exhaustions with pointwise upper or lower bounds on mean curvature of their boundaries. The proof involves choosing weighted bands to find minimisers which satisfy the required curvature constraints. 

\begin{proposition}\label{prop:decaying}
Let $(M,g)$ be a complete Riemannian manifold of dimension $3\le n\le7$. Then for every $\Lambda>h$, every compact subset of $M$ is contained in a compact domain $U$ with smooth boundary such that $H_{\partial U}\le\Lambda$. In particular, if $A_\infty<\infty$ then $M$ satisfies ($H^+_\Lambda$) for every $\Lambda>0$.
\end{proposition}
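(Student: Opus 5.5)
We prove this using the weighted-band minimisation of Section 3, with the weight $w_+=e^{(n-1)\sigma\rho}$ that grows towards the end, for some $\sigma$ with $h<(1+\varepsilon_0)(n-1)\sigma\le\Lambda$. Such a $\sigma$ exists once $\varepsilon_0$ is small, since $\Lambda>h$. (If $h=\infty$ there is nothing to prove, so assume $h<\infty$.)

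Fix a compact set $C$ and a compact domain $K\supset C$ with smooth boundary, and modify $w_+$ to be constant near $\partial K$ without changing it outside a small collar. For radii $r$ large, let $V_r$ be a compact domain with smooth boundary containing $B_r$. Let $\xi_r$ be a minimiser of $|\cdot|_{w_+}$ in $\mathcal{C}(V_r\setminus\mathring K,\partial V_r,\partial K)$; we will modify $w$ near $\partial V_r$ so that it is constant there as well. By the first-variation relations, $H=-\partial_\nu\log w_+$ on the free part, so $H\le(1+\varepsilon_0)(n-1)\sigma\le\Lambda$ there. On the outer contact set $H\le 0$. On the inner contact set $H=H_{\partial K}$, which is a priori uncontrolled. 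So the goal is to show that for a suitable $r$, $\xi_r$ does not touch $\partial K$. We must also rule out that $\xi_r$ hugs $\partial V_r$ in a way that breaks the bound on the flat collar. Since $w$ is constant there, the outer contact gives $H\le0$, and the collar region gives $H=0$, so this causes no problem. Smoothing by a short mean curvature flow then gives $H\le\Lambda+o(1)$, and we can absorb this error by starting with a slightly smaller $\Lambda$.

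The key step, and the expected main obstacle, is to push $\xi_r$ off $\partial K$ using the growth rate. The plan is to compare $\xi$ with separating hypersurfaces far out. Since $h<(n-1)\sigma$, there are arbitrarily large $r$ with $A(B_r)\le e^{(n-1)\sigma' r}$, where $h<(n-1)\sigma'<(n-1)\sigma$. A near-optimal surface $\Sigma_r$ enclosing $B_r$ has weighted area roughly $e^{(n-1)\sigma r}A(B_r)$. This is in the wrong direction, because the weight increases outward, so the minimiser prefers to stay inward. The roles therefore have to be arranged accordingly: the minimiser is pushed inward, and the difficulty is contact with $\partial K$ rather than with $\partial V$.

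To handle this, I would instead take the inner boundary to be a far separating surface and use $w_-=e^{-(n-1)\sigma\rho}$. With this weight, $H=-\partial_\nu\log w_-\le\Lambda$ still holds on the free part with the sign convention. The decaying weight makes surfaces near the inner boundary $\partial K$ weighted-expensive compared with surfaces at radius $r$, whose weighted area is $\lesssim e^{-(n-1)\sigma r}A(B_r)\to0$ along the chosen radii, because $\sigma>\sigma'$. Concretely, fix $K$ and let $m>0$ be a lower bound for the weighted area of any hypersurface meeting a fixed neighbourhood of $\partial K$. This bound comes from a density or monotonicity estimate at the scale of $K$, or from the lower bound $A(K)>0$ on enclosing area. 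Then choose $r$ with $e^{-(n-1)\sigma(r-1)}(A(B_r)+1)<m$, using $\Sigma_r$ as a competitor. This forces $\xi_r$ to avoid $\partial K$, so the only remaining contact is with the outer wall, where we arrange $w$ to be constant, giving $H\le0$ there.

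Iterating with $K_i$ exhausting $M$ produces the domains $U$. The final claim follows because $A_\infty<\infty$ implies $h=0$, so every $\Lambda>0$ is admissible. The delicate points to verify are, first, the lower bound $m$ for the weighted area of surfaces touching $\partial K$ (the enclosing-area bound $A(K)$ should suffice when the surface separates), and second, that modifying $w$ near the walls keeps $|\nabla\log w|\le\Lambda$.
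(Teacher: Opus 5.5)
Your final argument, after you abandon $w_+$, is a workable route, but it differs from the paper's. The paper also uses $w_-$ and the same competitor estimate, which it records as $A_w(K)=0$ for every compact $K$. It does not analyse the band minimiser directly. Instead it reruns the Bi--Zhu $\mu$-bubble construction of Proposition~\ref{propAinfty0} in the conformal metric $w^{2/(n-1)}g$. This yields a hypersurface that avoids both walls and is strictly mean concave for that metric, i.e.\ $H<-\partial_\nu\log w\le(1+\varepsilon_0)(n-1)\sigma<\Lambda$.

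You instead take the plain weighted band minimiser and observe that, for an upper bound on $H$, only the inner obstacle matters. On the outer contact set the first variation already gives $H\le-\partial_\nu\log w\le(1+\varepsilon_0)(n-1)\sigma$, so you do not even need to flatten $w$ near $\partial V_r$. Your route avoids the $\mu$-bubble machinery and needs only that the band minimum drop below a fixed constant $m(K)$, which is exactly what $A_w(K)=0$ provides. The paper's route gives the symmetric conclusion (no contact with either wall) by citing \cite{BZ}.

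The key step must be justified differently from how you state it. A lower bound $m$ for the weighted area of \emph{any} separating hypersurface meeting a neighbourhood of $\partial K$ is false. Take a far-out surface with a thin tube of the complement reaching in to touch $\partial K$; its weighted area near $K$ is arbitrarily small. Your fallback via $A(K)>0$ fails for two reasons. First, $A(K)\le A_\infty$ vanishes for cusp-like ends, where $h=0$ and the proposition does apply. Second, an unweighted bound $|\xi|\ge A(K)$ says nothing about the $w_-$-weighted area near $K$.

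What works is a density estimate for the minimiser $\Omega_r$ itself at a contact point $x\in\xi_r\cap\partial K$. The outward competitor $\Omega_r\cup B_s(x)$ is admissible, and $w_-$ is pinched between two constants on a fixed neighbourhood of $K$. Together these give $|B_s(x)\setminus\Omega_r|\ge c\,s^n$. The inward competitor is not admissible, so for the other side you must use $K$ itself: $|B_s(x)\cap\Omega_r|\ge|B_s(x)\cap K|\ge c\,s^n$ because $\partial K$ is smooth. The relative isoperimetric inequality then gives $|\xi_r\cap B_{s_0}(x)|_w\ge m>0$, with $m$ depending only on $K$ and not on $r$.

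Finally, choose the near-optimal surface $\Sigma_r$ first, and only then $V_r$ with $\Sigma_r\subset\mathring V_r$, so that $\Sigma_r$ really is a competitor in the band.
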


\begin{proof}
Fix some point $p$ in $M$. Since $\Lambda>h$, there exist constants $\varepsilon_0$ and $\sigma$ such that $h<(n-1)\sigma$ and $(1+\varepsilon_0)(n-1)\sigma<\Lambda$. For these choices of $\varepsilon_0$ and $\sigma$, let $\rho$ be a smoothening of the distance function as described above. Let 
\begin{align}
w=w_-=e^{-(n-1)\sigma\rho}.
\end{align}
We first show that $A_w(K)=0$ for every compact $K$. If $K\subset B_R$ for some radius $R$, and $\Omega\supset B_R$ is a compact domain with $|\partial\Omega|\le A(B_R)+1$, then $\rho\ge R-1$ on $\partial\Omega$. So,
\begin{align}
A_w(K)\ \le\ |\partial\Omega|_w\ \le\ e^{-(n-1)\sigma(R-1)}\bigl(1+A(B_R)\bigr).
\end{align}
Since $(n-1)\sigma>h$, the right side tends to $0$ along a sequence $R\to\infty$.

Now fix a compact domain $K$ with smooth boundary. Since $A_w(K)=0$, $a^w_K(V_i)\to 0$ for every large compact domains $V_i$ exhausting $M$. By the same argument as Proposition \ref{propAinfty0} (or as Theorem $1.2$ in \cite{BZ}) applied to conformal metric $w^{2/(n-1)}g$ in compact bands, we get a smooth hypersurface $\xi$ in some $\mathcal{C}(V_i\setminus \mathring K,\partial V_i,\partial K)$ that does not touch $\partial K$ or $\partial V_i$ for large enough $i$. On this hypersurface,
\begin{align}
H\ \le\ -\partial_\nu\log w\ =\ (n-1)\sigma\,\partial_\nu\rho\ \le\ (1+\varepsilon_0)(n-1)\sigma\ <\ \Lambda .
\end{align}
Hence we get a smooth hypersurface with $H<\Lambda$, bounding a compact domain which contains any prescribed compact subset of $\mathring K$.
\end{proof}

\begin{lemma}\label{lem:tentacles}
Let $\sigma>\kappa$ and $w=w_+$. Let $\Omega$ be a domain whose boundary minimises $|\cdot|_w$ among compact perturbations supported outside a compact set $C$, with $|\partial\Omega\setminus C|_w<\infty$. Then $\partial\Omega$ is bounded.
\end{lemma}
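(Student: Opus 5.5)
We argue by contradiction, following the proof of Corollary \ref{cor:boundedgeometry} but replacing the uniform monotonicity constant with one that degenerates at most exponentially, at rate $\kappa$, along the end. Suppose $\partial\Omega$ is unbounded. Outside $C$, $\partial\Omega$ is locally minimising for $|\cdot|_w$, that is, for the area of the conformal metric $\tilde g=w^{2/(n-1)}g$. Since $3\le n\le 7$, it is therefore a smooth hypersurface there, with
\begin{align}
H=-\partial_\nu\log w,\qquad |H|\le(1+\varepsilon_0)(n-1)\sigma=:\Lambda_0 .
\end{align}
This bound holds in the original metric $g$. We will pick points $x_j\in\partial\Omega$ with $d(x_j)\to\infty$, each at distance at least $1$ from $C$, and such that the balls $B(x_j,\tilde r(x_j))$ are pairwise disjoint. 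This is possible because $\tilde r\le 1$, so points spaced at mutual distance at least $2$ suffice, and such points exist since $\partial\Omega$ is unbounded.

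The key step is a lower bound on the area of $\partial\Omega$ inside each ball $B_j=B(x_j,r_j)$, where $r_j=\tilde r(x_j)$. In the chart on $B_j$ we have $\frac14\delta\le g\le 4\delta$. So $\partial\Omega\cap B_j$ is a hypersurface through the centre, with no boundary inside $B_j$, which is almost minimising in a region uniformly bilipschitz to a Euclidean ball of radius comparable to $r_j$. Moreover, its $g$-mean curvature is bounded by $\Lambda_0$. I would use the minimising property directly rather than the mean curvature, because the chart carries no derivative control on $g$. Comparing with cone or coordinate-sphere competitors, which is a standard density argument for quasi-minimisers, gives
\begin{align}
|\partial\Omega\cap B_j|_g\ \ge\ c_n\, r_j^{\,n-1}.
\end{align}
To make this work, note that $w$ varies by at most a factor $e^{2(n-1)\sigma r_j}\le e^{2(n-1)\sigma}$ on $B_j$. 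Hence $\partial\Omega$ is a $\Lambda$-quasi-minimiser of the Euclidean area in the chart, with $\Lambda$ a constant depending only on $n$ and $\sigma$, and the Euclidean isoperimetric inequality yields the density bound. I expect this step to be the main obstacle: the constant must be independent of $j$ even though $r_j\to 0$. The bilipschitz chart with fixed constant $4$ is exactly what guarantees this.

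Next we convert the bound to the weighted area. On $B_j$ we have $w\ge e^{(n-1)\sigma(d(x_j)-2)}$, since $|\rho-d|\le1$ and $r_j\le1$. Since $\sigma>\kappa$, fix $\kappa<\kappa'<\sigma$. Then for $d(x_j)$ large, $r_j\ge e^{-\kappa' d(x_j)}$. It follows that
\begin{align}
|\partial\Omega\cap B_j|_w\ \ge\ c_n e^{-2(n-1)\sigma}\, e^{(n-1)(\sigma-\kappa')d(x_j)}\ \longrightarrow\ \infty .
\end{align}
Summing over the disjoint balls, all of which lie outside $C$, gives $|\partial\Omega\setminus C|_w=\infty$, contradicting the hypothesis. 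Hence $\partial\Omega$ is bounded. If $\kappa=\infty$ the lemma is vacuous. The only remaining care is choosing the $x_j$ so that the balls avoid $C$, and handling the case where $\partial\Omega$ lies only partly outside $C$; this is routine because unboundedness means $\partial\Omega$ leaves every compact set.
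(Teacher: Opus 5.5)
Your proposal is correct and follows essentially the same route as the paper. Both arguments argue by contradiction, take far-out points of $\partial\Omega$ with disjoint balls of radius comparable to $\tilde r$ avoiding $C$, and use uniform quasi-minimality of Euclidean area in the bilipschitz chart to get a density bound $c\,r_j^{n-1}$; the weight $e^{(n-1)\sigma d}$ then outgrows the collapse rate $r_j\ge e^{-\kappa' d}$. The differences are cosmetic: you use radius $\tilde r$ rather than the paper's $\tilde r/2$, and your oscillation factor for $w$ should include the harmless extra $(1+\varepsilon_0)$ coming from $|\nabla\rho|\le 1+\varepsilon_0$.
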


\begin{proof}
Suppose $\partial\Omega$ is unbounded. Fix a point $p$ in $M$. Fix $\varepsilon'>0$ such that $\sigma>\kappa+2\varepsilon'$, so that when $d(q)$ is large, we have $\tilde r(q)\ge e^{-(\kappa+\varepsilon')d(q)}$. Choose points $q_j\in\partial\Omega$ with $d(q_j)\to\infty$, pairwise at distance at least $2$, with $B_1(q_j)\cap C=\emptyset$. Let $r_j=\tilde r(q_j)/2$. On $B_{r_j}(q_j)$ the function $\rho$ varies by at most $2$, so $w$ varies by at most the factor $e^{2(n-1)\sigma}$. Hence, in the chart around $q_j$, $\partial\Omega$ is a quasi-minimiser of Euclidean area, with a constant depending only on $n$ and $\sigma$. The density estimate for quasi-minimisers gives $|\partial\Omega\cap B_{r_j}(q_j)|\ge c\,r_j^{n-1}$ for some constant $c$, and therefore for large enough $j$,
\begin{align}
|\partial\Omega\cap B_{r_j}(q_j)|_w\ \ge\ c\,e^{(n-1)\sigma(d(q_j)-2)}\,r_j^{n-1}\ \ge\ c'\,e^{(n-1)(\sigma-\kappa-\varepsilon')\,d(q_j)}\ \longrightarrow\ \infty .
\end{align}
Here $c'=ce^{-2(n-1)\sigma}2^{-(n-1)}$ is constant. The balls $B_{r_j}(q_j)$ are disjoint and avoid $C$, and so we contradict the assumption that $|\partial\Omega\setminus C|_w<\infty$.
\end{proof}

\begin{proposition}\label{prop:growing}
Let $(M,g)$ be a complete Riemannian manifold of dimension $3\le n\le7$ with a single end and $A_\infty>0$. Then $M$ satisfies ($H^-_\Lambda$) for every $\Lambda>(n-1)\kappa$.
\end{proposition}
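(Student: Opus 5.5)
Given a compact set, I would take a compact domain $K$ with smooth boundary containing it and $p\in\mathring K$, chosen so that $M\setminus K$ is connected. Then I would minimise with the growing weight $w=w_+=e^{(n-1)\sigma\rho}$, where $\varepsilon_0,\sigma$ are chosen so that $\sigma>\kappa$ and $(1+\varepsilon_0)(n-1)\sigma<\Lambda$; this is possible since $\Lambda>(n-1)\kappa$. The point is the mirror image of Proposition \ref{prop:decaying}. A minimiser of $|\cdot|_w$ whose free part avoids $\partial V$ satisfies
\begin{align}
H=-\partial_\nu\log w=-(n-1)\sigma\,\partial_\nu\rho\ \ge\ -(1+\varepsilon_0)(n-1)\sigma>-\Lambda .
\end{align}
On the inner contact set we have $H\ge -\partial_\nu \log w>-\Lambda$ as well. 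So what is needed is a $|\cdot|_w$-minimiser in some band $V_i\setminus\mathring K$ that does not touch the outer wall $\partial V_i$. After a short mean curvature flow, which changes a lower bound only by a factor $e^{Ct}$, this gives a smooth domain $U\supset K$ with $H_{\partial U}\ge-\Lambda$.

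I would take domains $V_i$ exhausting $M$ and minimisers $\xi_i$ of $|\cdot|_w$ in $\mathcal C(V_i\setminus\mathring K,\partial V_i,\partial K)$, and argue by contradiction, supposing every $\xi_i$ touches $\partial V_i$. The weighted areas satisfy $|\xi_i|_w\le|\partial K|_w<\infty$ and are non-increasing, since each $\xi_i$ is a competitor for the later problems. Two cases remain.

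\emph{Case 1: a subsequence meets a fixed compact set.} Passing to a limit as in Proposition \ref{propAinftyinfty}, we get a set $\Omega\supseteq K$ whose boundary minimises $|\cdot|_w$ among compact perturbations containing $K$, hence among perturbations supported outside $C=K$, with finite weighted area. By Lemma \ref{lem:tentacles} its boundary is bounded. If the limit separates $K$ from the end, it lies inside some $V_i$ away from the wall. Being a weighted minimiser there, it beats or ties $\xi_i$, so $V_i$ admits a minimiser not touching $\partial V_i$, a contradiction. If it does not separate, it bounds a compact region disjoint from $\mathring K$ (single end), and filling it in lowers the weighted area.

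\emph{Case 2: the $\xi_i$ eventually leave every compact set.} Then $\xi_i$ separates $\overline B_r$ from the end for any fixed $r$ and large $i$. Since $w\ge e^{(n-1)\sigma(r-1)}$ outside $B_r$, and the unweighted area is at least $A(B_r)\ge A(K)$, we get
\begin{align}
|\xi_i|_w\ \ge\ e^{(n-1)\sigma(r-1)}A(K).
\end{align}
Letting $r\to\infty$ contradicts the bound $|\xi_i|_w\le|\partial K|_w$, provided $A(K)>0$. This is where $A_\infty>0$ enters: enlarge $K$ at the start so that $A(K)>0$. This is possible since $A(B_r)\to A_\infty>0$.

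The main obstacle is Case 1. One has to justify that the limit is a genuine $|\cdot|_w$-minimiser among compact perturbations, using lower semicontinuity and local convergence in the conformal metric, and handle a possibly noncompact limit carefully enough to apply Lemma \ref{lem:tentacles}. I would first mimic the convergence argument of Proposition \ref{propAinftyinfty} verbatim in the metric $w^{2/(n-1)}g$. The unbounded case is then excluded by the lemma in place of the $NT$ condition.
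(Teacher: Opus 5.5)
Your proposal is correct and follows essentially the paper's proof. It uses the same weight $w_+$ with $\sigma>\kappa$ and the same dichotomy: minimisers escaping every compact set are excluded by the exponential growth of $w$ together with $A_\infty>0$, and a limit meeting a fixed compact set is bounded by Lemma \ref{lem:tentacles}, so by the single-end hypothesis it is a weighted minimiser avoiding the outer wall. It then gives the same first-variation bound $H\ge-(1+\varepsilon_0)(n-1)\sigma>-\Lambda$. The only differences are cosmetic: you argue by contradiction and enlarge $K$ so that $A(K)>0$, whereas the paper directly produces a weighted hull of the given $K$ via $|\xi_i|_w\ge A_w(B_r)\ge e^{(n-1)\sigma(r-1)}A(B_r)\to\infty$.
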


\begin{proof}
Fix any point $p$ in $M$. Choose $\varepsilon_0$ and $\sigma$ satisfying $\sigma>\kappa$ and $(1+\varepsilon_0)(n-1)\sigma<\Lambda$, and let $w=w_+$.

First, note that $A_w(B_r)\to\infty$ as $r\to\infty$. Indeed, if $\Omega\supset B_r$ then $\rho\ge r-1$ on $\partial\Omega$, so $|\partial\Omega|_w\ge e^{(n-1)\sigma(r-1)}A(B_r)$, and $A(B_r)$ is positive for large $r$ because $A_\infty>0$.

Next, fix a compact domain $K$ with smooth boundary. We show that $K$ has a weighted hull, that is, a compact domain $\Omega\supset K$ with $|\partial\Omega|_w=A_w(K)$. Take compact domains $V_i$ exhausting $M$, and consider the minimisers $\xi_i$ of $|\cdot|_w$ in $\mathcal{C}(V_i\setminus\mathring K,\partial V_i,\partial K)$. Let the $\xi_i$ bound domains $\Omega_i\supset K$. Suppose first that the $\xi_i$ eventually leave every compact set. Then for large $i$ they enclose $B_r$, so $|\xi_i|_w\ge A_w(B_r)$, while $|\xi_i|_w\le|\partial K|_w$; this contradicts $A_w(B_r)\to\infty$. Hence a subsequence meets a fixed compact set $Z$. Then the limit $\Omega_\infty$ of the enclosed domains has non-empty boundary $\xi_\infty$ near $Z$, and $\Omega_\infty$ minimises $|\cdot|_w$ among compact perturbations containing $K$. By Lemma \ref{lem:tentacles}, $\xi_\infty$ is bounded. Since $M$ has one end, $\Omega_\infty$ is either bounded or contains a neighbourhood of the end. In the second case, it bounds a compact set $W$ disjoint from $\mathring K$, and so we can fill in $W$ to get $\Omega_\infty=M$ by minimality of the limit, contradicting $\xi_\infty\neq\emptyset$. Hence $\Omega_\infty$ is bounded, so $\xi_\infty$ separates $\partial K$ from the end and lies inside some band, where it is a minimiser of $|\cdot|_w$ that does not touch the outer wall. In that case, it is a weighted hull of $K$.

The mean curvature of the boundary therefore satisfies
\begin{align}
H\ \ge\ -\partial_\nu\log w\ \ge\ -(1+\varepsilon_0)(n-1)\sigma\ >\ -\Lambda
\end{align}
almost everywhere, and a short mean curvature flow makes it smooth while keeping $H>-\Lambda$.
\end{proof}

\begin{remark}\label{rem:ends}
Proposition \ref{prop:decaying} uses neither condition $NT$ nor a single end. Proposition \ref{prop:growing} uses a single end only to exclude the case where the limit does not separate $K$ from the end. With several ends, the same argument works, provided every end has $A_\infty>0$.
\end{remark}

\begin{lemma}\label{lem:noncollapsed}
If $M$ is non-collapsed, then $A_\infty>0$.
\end{lemma}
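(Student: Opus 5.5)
We have to show that non-collapsed implies $A_\infty>0$: there is a uniform $r_0>0$ with $\tilde r(q)\ge r_0$ for every $q\in M$, and from this we want a uniform positive lower bound on the area of any hypersurface enclosing a sufficiently large ball. We will use the charts directly, with no curvature assumptions. In each ball $B(q,r_0)$ we have a coordinate chart with $\frac14\delta_{ij}\le g_{ij}\le4\delta_{ij}$. Consequently volumes, areas and distances in the chart are comparable to Euclidean ones, with constants depending only on $n$. In particular $|B(q,r_0/2)|\ge c_1 r_0^n$ for a dimensional constant $c_1>0$, and the relative isoperimetric inequality in the chart holds with dimensional constants.

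Fix $R$ large and let $\Omega$ be a compact domain with smooth boundary containing $B_R=B(p,R)$. Since $M$ is non-compact and complete, there is a point $q$ with $d(p,q)=R+r_0$ (or any point far out) such that part of the ball $B(q,r_0/2)$ lies outside $\Omega$. More carefully, $\Omega$ is compact, so there are points of $M$ outside $\Omega$. Take a minimising geodesic $\gamma$ from $p$ to a point outside $\Omega$. Along $\gamma$ there is a first parameter where $\gamma$ exits $\Omega$, and this happens at distance at least $R$ from $p$. Let $q=\gamma(t_0)$ be a point of $\partial\Omega$ on $\gamma$ with $t_0\ge R$. If $R\ge r_0$, the point $\gamma(t_0-r_0/4)$ lies in $\Omega$. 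We now use a relative isoperimetric argument in the Euclidean chart on $B(q,r_0)$. Look at $B(q,r_0/2)$. Either $|\Omega\cap B(q,r_0/2)|$ and $|B(q,r_0/2)\setminus\Omega|$ are both at least $\eta r_0^n$ for a dimensional $\eta$, or one of them is small. In the first case, the relative isoperimetric inequality in the (quasi-Euclidean) chart gives $|\partial\Omega\cap B(q,r_0)|\ge c_2 r_0^{n-1}$.

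The main obstacle is to rule out the degenerate cases, where $\Omega$ almost fills or almost misses the ball around $q$. To handle this we would instead run a continuity argument along $\gamma$. Here $\Omega$ contains $B_R$ and so contains $B(\gamma(R-r_0),r_0/2)$ entirely. Choose a point $x=\gamma(t_1)$ with $t_1$ beyond the exit point such that $B(x,r_0/2)$ is largely outside $\Omega$. Such a point exists, since $\Omega$ is compact: go far along a ray from $p$, and note that the ray leaves $\Omega$ eventually. Along $\gamma$, the function $t\mapsto|\Omega\cap B(\gamma(t),r_0/2)|/|B(\gamma(t),r_0/2)|$ is continuous. It is $1$ at $t=R-r_0$ and near $0$ far out, so at some $t$ it equals $\frac12$. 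Applying the relative isoperimetric inequality at that point gives $|\partial\Omega|\ge c_2r_0^{n-1}$. This uses only the chart comparisons; the volume of $B(\gamma(t),r_0/2)$ is bounded below in terms of $r_0$, and the chart ball contains a metric ball comparably. To make "far out the ray leaves $\Omega$ largely" precise, take a ray $\gamma$ from $p$ (it exists by completeness and non-compactness). For large $t$, $B(\gamma(t),r_0/2)$ is disjoint from the compact $\Omega$, so the ratio is $0$ there.

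Thus every compact $\Omega\supset B_R$ with $R\ge r_0$ satisfies $|\partial\Omega|\ge c_2r_0^{n-1}$. So $A(B_R)\ge c_2 r_0^{n-1}$ for all large $R$, and hence $A_\infty\ge c_2r_0^{n-1}>0$. The only routine points are checking that the relative isoperimetric inequality transfers to the chart with constants depending only on $n$, using the bounds $\frac14\delta\le g\le4\delta$, and that metric balls of radius $r_0/2$ sit inside Euclidean balls of comparable radius in the chart.
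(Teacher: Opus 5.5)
Your final argument is essentially the paper's proof: you track the fraction of a small ball centred on a ray that lies in $\Omega$, use continuity to find a point where it equals $\frac12$, and apply the relative isoperimetric inequality in the quasi-Euclidean chart. The one fix needed is the radius. A metric ball of radius $r_0/2$ is only guaranteed to lie in a Euclidean ball of radius $r_0$, which need not be contained in the chart image, so take a smaller radius such as the paper's $s=r_0/16$ (any $s\le r_0/4$ works); then $B_s(x)$ lies in a Euclidean ball of radius $2s$ contained in the image of $B_{4s}(x)$.
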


\begin{proof}
Since $M$ is non-collapsed, there exists some $r_0$ such that $\tilde r(q)\geq r_0$. Let $s=r_0/16$, and let $\gamma$ be a ray from a point $p$ in $M$. Given a radius $R$ and a compact domain $\Omega\supset B_R$, the function $f(t)=|\Omega\cap B_s(\gamma(t))|/|B_s(\gamma(t))|$ is continuous, equal to $1$ for $t\le R-s$, and equal to $0$ for $t$ large. So $f(t^*)=\frac12$ for some $t^*\ge R-s$. Let $x=\gamma(t^*)$. In the chart on $B_{r_0}(x)$, the metric ball $B_s(x)$ lies in the Euclidean ball of radius $2s$ about $x$, which lies in the image of $B_{4s}(x)$. The relative isoperimetric inequality in this Euclidean ball gives $|\partial\Omega\cap B_{4s}(x)|\ge c(n)\,s^{n-1}$. Hence $A(B_R)\ge c(n)\,s^{n-1}$ for all large $R$.
\end{proof}

\begin{proposition}\label{prop:noncollapsed}
Let $(M,g)$ be a complete, non-collapsed Riemannian manifold of dimension $3\le n\le7$, with any number of ends. Then for every $\Lambda>0$, every compact set is contained in a compact domain with smooth boundary on which $H>-\Lambda$. In particular, $M$ satisfies ($H^-_\Lambda$) for every $\Lambda>0$.
\end{proposition}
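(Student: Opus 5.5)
We follow the proof of Proposition \ref{prop:growing}, using the weight $w=w_+=e^{(n-1)\sigma\rho}$. Non-collapse gives $\kappa=0$, so every $\sigma>0$ is admissible. Fix $\Lambda>0$ and choose $\varepsilon_0\in(0,1]$ and $\sigma>0$ with $(1+\varepsilon_0)(n-1)\sigma<\Lambda$. Then Lemma \ref{lem:tentacles} applies, since $\sigma>0=\kappa$. It says that any domain minimising $|\cdot|_w$ among compact perturbations away from a compact set, with finite weighted boundary area there, has bounded boundary.

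The first step is to show that $A_w(B_r)\to\infty$. By Lemma \ref{lem:noncollapsed}, $A(B_r)\ge c_0>0$ for all large $r$. If $\Omega\supset B_r$, then $\rho\ge r-1$ on $\partial\Omega$, so $|\partial\Omega|_w\ge e^{(n-1)\sigma(r-1)}c_0\to\infty$.

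The second step handles several ends. Fix a compact domain $K$ with smooth boundary. Enlarging it if necessary, assume $M\setminus K$ has finitely many unbounded components $E_1,\dots,E_m$ and no bounded ones; the bounded ones can be filled in. For exhausting domains $V_i$, take the minimisers $\xi_i$ of $|\cdot|_w$ in $\mathcal{C}(V_i\setminus\mathring K,\partial V_i,\partial K)$. Their weighted area is bounded by $|\partial K|_w$.

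The third step is to rule out that some part of $\xi_i$ escapes down an end. Suppose the part of $\xi_i$ lying in $E_j$ left every compact set. This part must still separate $K$ from infinity in $E_j$. Lemma \ref{lem:noncollapsed} applies in each end: its ray argument only needs a ray into that end. So this part would enclose a large ball-piece of $E_j$, and its weighted area would be at least $e^{(n-1)\sigma(r-1)}c_0$. This contradicts the uniform bound $|\partial K|_w$. Hence, passing to a subsequence, each end-component meets a fixed compact set $Z$. The limit $\Omega_\infty$ then minimises $|\cdot|_w$ among compact perturbations containing $K$, and Lemma \ref{lem:tentacles} makes $\partial\Omega_\infty$ bounded. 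Within each end, $\Omega_\infty$ either is bounded or contains a neighbourhood of infinity. In the latter case, minimality in that end forces the boundary in $E_j$ to be empty; otherwise, filling in gives strictly smaller weighted area. This contradicts the escape bound unless $\Omega_\infty$ is bounded in every end. Hence $\Omega_\infty$ is a bounded weighted hull lying inside some $V_i$ and not touching $\partial V_i$.

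Finally, by the first variation, $H\ge-\partial_\nu\log w\ge-(1+\varepsilon_0)(n-1)\sigma>-\Lambda$ almost everywhere, on both the free part and the inner contact set. A short mean curvature flow smooths the hypersurface while keeping $H>-\Lambda$. Taking $K$ arbitrarily large gives ($H^-_\Lambda$).

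The main obstacle is the multi-end step. I would run the compactness argument end by end, using the per-end lower bound on separating area coming from the relative isoperimetric inequality along a ray in that end. This is exactly the condition identified in Remark \ref{rem:ends}: every end has $A_\infty>0$.
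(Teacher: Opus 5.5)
Your route differs from the paper's. The paper avoids the exponential weight. It minimises $G(\Omega)=|\partial\Omega|+\lambda|\Omega|$ with $\lambda\in(0,\Lambda)$, so the limit $\Omega_\infty$ has finite perimeter \emph{and} finite volume. Non-collapse gives two-sided volume density bounds for $G$-minimisers away from $K$. Hence an unbounded $\partial\Omega_\infty$ would have infinite perimeter. An unbounded $\Omega_\infty$ with bounded boundary would contain infinitely many disjoint balls of radius $r_0/2$, hence have infinite volume. That argument never looks at the end structure, and $H\ge-\lambda$ comes straight from the first variation.

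You instead reuse Proposition \ref{prop:growing}. Non-collapse gives $\kappa=0$, so Lemma \ref{lem:tentacles} holds for every $\sigma>0$, and a per-end version of Lemma \ref{lem:noncollapsed} gives the escape bound. This is a legitimate alternative and makes Remark \ref{rem:ends} concrete. But it commits you to an end-by-end analysis, and that is where your write-up breaks.

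You take $E_1,\dots,E_m$ to be the unbounded components of $M\setminus K$ and treat each as a single end. The statement allows infinitely many ends, and then some $E_j$ contains infinitely many ends however large $K$ is. The dichotomy ``$\Omega_\infty\cap E_j$ is bounded or contains a neighbourhood of infinity'' then fails: $\Omega_\infty$ may contain a neighbourhood of one end of $E_j$ and miss another. Then $E_j\setminus\Omega_\infty$ is unbounded, so filling it in is not a compact perturbation. Also $\partial\Omega_\infty\cap E_j\neq\emptyset$, so knowing that each $\xi_i\cap E_j$ meets $Z$ gives no contradiction. With finitely many ends you can enlarge $K$ so that each $E_j$ is one-ended, and your argument is fine.

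The repair is to drop the filling step and apply the escape bound to $\Omega_\infty$ itself. Once $\partial\Omega_\infty$ is bounded, choose a compact domain $L\supseteq K\cup\partial\Omega_\infty$. Each of the finitely many unbounded components $F$ of $M\setminus L$ lies either inside or outside $\Omega_\infty$. Suppose $F\subseteq\Omega_\infty$, and take a ray $\gamma$ from $p$ eventually contained in $F$.
\begin{itemize}
\item By $L^1_{\mathrm{loc}}$ convergence, $|\Omega_i\cap B_s(\gamma(t))|/|B_s(\gamma(t))|$ is close to $1$ for $t\in[t_0,r]$ when $i$ is large.
\item It vanishes for large $t$, because $\Omega_i$ is bounded, so it equals $\frac12$ at some $t^*\ge r$.
\item The relative isoperimetric inequality then gives $|\xi_i|_w\ge c(n)s^{n-1}e^{(n-1)\sigma(r-4s-1)}$.
\end{itemize}
Letting $r\to\infty$ contradicts $|\xi_i|_w\le|\partial K|_w$. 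Hence $\Omega_\infty$ is bounded, and the rest of your argument goes through.
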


\begin{proof}
Let $\tilde r(q)\ge r_0>0$ on all $q$ in $M$. Fix $\lambda\in(0,\Lambda)$ and a compact domain $K\subset M$ with smooth boundary. We define the functional $G(\Omega)=|\partial\Omega|+\lambda|\Omega|$. Take compact domains $V_i$ exhausting $M$, and minimisers $\Omega_i$ of $G$ over $K\subseteq\Omega\subseteq V_i$. As in the case of area minimisers, the $\Omega_i$ have $C^{1,1}$ boundaries, with $H=-\lambda$ on the free part, $H\ge-\lambda$ on the inner contact set, and $H\le-\lambda$ on the outer contact set. Since $K$ is admissible, $|\partial\Omega_i|\le G(K)$ and $|\Omega_i|\le G(K)/\lambda$. Hence a subsequence of $\{\Omega_i\}$ converges to a domain $\Omega_\infty\supseteq K$ with
\begin{align}
|\partial\Omega_\infty|\le G(K),\qquad |\Omega_\infty|\le G(K)/\lambda ,
\end{align}
which minimises $G$ among compact perturbations containing $K$.

Since $M$ is non-collapsed and $\Omega_\infty$ minimises $G$ amongst compact perturbations containing $K$, for any point $x\in \partial\Omega_\infty$, we can compare $\Omega_\infty\cup B(x,r)$ and $\Omega_\infty\setminus B(x,r)$ with $\Omega_\infty$ for a small radius $r$. Thus there are constants $s_0,c_0>0$, depending only on $n$, $r_0$ and $\lambda$, such that $| B(x,r)\setminus \Omega_\infty|\geq c_0r^n$ and $|\Omega_\infty\cap B(x,r)|\geq c_0r^n$ for every $x\in\partial\Omega_\infty$ with $d(x,K)\ge s_0$ and $r\leq s_0$. Then the relative isoperimetric inequality, as in Lemma \ref{lem:noncollapsed}, implies $|\partial\Omega_\infty\cap B_{s_0}(x)|\ge c_1$ for every $x\in\partial\Omega_\infty$ with $d(x,K)\ge s_0$ for some constant $c_1>0$. If $\partial\Omega_\infty$ were unbounded, it would contain infinitely many points at mutual distance at least $2s_0$ and at distance at least $s_0$ from $K$. The density estimate would then give $|\partial\Omega_\infty|=\infty$. If instead $\Omega_\infty$ were unbounded with bounded boundary, it would contain an unbounded component of the complement of a compact domain containing $\partial\Omega_\infty$. Such a component contains infinitely many disjoint balls of radius $r_0/2$, so $|\Omega_\infty|=\infty$. Since $|\partial \Omega_\infty|$ and $|\Omega_\infty|$ are bounded above, both conclusions give rise to contradictions, and so $\Omega_\infty$ is bounded.

Hence, $\Omega_\infty$ is bounded and minimises $G$ among all bounded sets containing $K$. Hence it is a minimiser in any band $\mathcal{C}(V\setminus\mathring K,\partial V,\partial K)$ with $\bar\Omega_\infty\subset\mathring V$, and also it does not touch $\partial V$. By the sign conditions, $H\ge-\lambda$ almost everywhere on $\partial\Omega_\infty$. A short mean curvature flow gives a smooth hypersurface with $H>-\Lambda$, which bounds a compact domain containing any prescribed compact subset of $\mathring K$. Iterating this process with a suitable sequence $\{K_i\}$ of domains exhausting $M$ as in the earlier arguments finishes the proof.
\end{proof}

We summarise the different existence criteria obtained above in Theorem \ref{cor:L1} for convenience.

\subsection*{Proof of Theorem \ref{cor:L1}}
Part (1) follows from Proposition \ref{prop:decaying} and Theorem \ref{thm:Hplus}. Part (2) is implied by Proposition \ref{prop:growing} and Theorem \ref{thm:Hminus}. Part (3) follows from Proposition \ref{prop:decaying} and Proposition \ref{prop:growing}, combined with Theorem \ref{thm:Habs}. For part (4), Proposition \ref{prop:noncollapsed} and Theorem \ref{thm:Hminus} give the first conclusion, while Propositions \ref{prop:noncollapsed} and \ref{prop:decaying} along with Theorem \ref{thm:Habs} give the second conclusion.
\qed

\begin{remark}\label{rem:failure}
For a single-ended $M$, Theorem \ref{cor:L1} shows that $\int_{\partial U_i}H^+\to0$ can fail for every exhaustion only if $A_\infty=\infty$, and $\int_{\partial U_i}H^-\to0$ can fail for every exhaustion only if $A_\infty=0$ or $\kappa=\infty$. In particular, both can fail together only if $A_\infty=\infty$ and $\kappa=\infty$.
\end{remark}

\subsection{Almost $c$-mean convex exhaustions}\label{subsec:cconvex}

For some constant $c>0$, we study the existence of exhaustions whose boundaries are almost $c$-convex. That is, we show that under certain suitable conditions, there exist exhaustions $\{U_i\}_{i=1}^\infty$, such that on the boundaries, the integral $\int_{\partial U_i}(H-c)^-$ can be made arbitrarily small. Note that here we ensure that the $L^1$ error of the mean curvature is arbitrarily small under the constant $c$, but this does not imply that the hypersurfaces have positive mean curvature everywhere.

For $c\in\mathbb R$ and a bounded set $\Omega\subset M$ of finite perimeter, write
\begin{align}
F_c(\Omega)=|\partial\Omega|-c\,|\Omega| ,
\end{align}
where $|\partial\Omega|$ denotes the perimeter of $\Omega$ and $|\Omega|$ its volume. As in the previous sections, for a compact domain $K\subset M$ with smooth boundary, we define
\begin{align}
A^c(K)=\inf\bigl\{F_c(\Omega):\ \Omega\subset M \text{ bounded, of finite perimeter, with } K\subseteq\Omega\bigr\}.
\end{align}
By approximation, the infimum is the same if it is taken over compact domains $\Omega$ with smooth boundary and $K\subset\mathring\Omega$. For compact domains $K\subset\mathring V$ with smooth boundaries, and $X=V\setminus\mathring K$, we define
\begin{align}
a^c_K(V)=\inf\bigl\{F_c(\Omega):\ \Omega \text{ of finite perimeter, with } K\subseteq\Omega\subseteq V\bigr\}.
\end{align}
Equivalently, $a^c_K(V)$ is the infimum of $|\xi|-c\,|\Omega_\xi|$ over hypersurfaces $\xi\in\mathcal{C}(X,\partial V,\partial K)$, where $\Omega_\xi$ denotes the region bounded by $\xi$ that contains $K$. This infimum is attained by a set whose boundary is a $C^{1,1}$ hypersurface. For a smooth positive function $w$, the weighted quantities $A^c_w(K)$ and $a^{c,w}_K(V)$ are defined in the same way, with $F_c$ replaced by $F^w_c(\Omega)=|\partial\Omega|_w-c\,|\Omega|$.

The minimisers of $F^w_c$ satisfy mean curvature constraints identical to the area minimisers in the earlier sections, with the obvious changes. Minimisers of $F^w_c$ in $\mathcal{C}(X,\partial V,\partial K)$ exist and are $C^{1,1}$. Almost everywhere on such a minimiser $\xi$,
\begin{align}
wH+\partial_\nu w=c\ \text{ on the free part},\qquad
wH+\partial_\nu w\ge c\ \text{ on }\xi\cap\partial K,\qquad
wH+\partial_\nu w\le c\ \text{ on }\xi\cap\partial V,
\end{align}
and $H=H_{\partial K}$, $H=H_{\partial V}$ on the respective contact sets. Smoothing changes $\int(H-c)^-$ by an arbitrarily small amount. Finally, if $A^c(K)>-\infty$, then for every $\delta>0$ there is $R$ such that $A^c(K)\le a^c_K(W)<A^c(K)+\delta$ for every compact domain $W\supset B_R$.

\begin{lemma}\label{lem:isoperimetric}
Suppose that $M$ satisfies a linear isoperimetric inequality with constant $c'>c>0$, and let $K$ be a compact domain. Then $A^c(K)\ge-C'$. Moreover, if $w\ge1$ and $w=1$ near $\partial K$, then every $\Omega$ with $K\subseteq\Omega$ and $F^w_c(\Omega)\le F^w_c(K)$ satisfies
\begin{align}
|\partial\Omega|\ \le\ \alpha_K:=\frac{F_c(K)+cC'/c'}{1-c/c'} .
\end{align}
\end{lemma}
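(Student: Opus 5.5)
The plan is to use the linear isoperimetric inequality in the form $|\Omega|\le (|\partial\Omega|+C')/c'$, valid for every compact domain $\Omega$. By approximation it then holds for every bounded set of finite perimeter, since the infimum defining $A^c(K)$ can be taken over smooth compact domains.

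\emph{Lower bound on $A^c(K)$.} For a bounded $\Omega\supseteq K$, substitute the volume bound into $F_c$:
\begin{align}
F_c(\Omega)=|\partial\Omega|-c|\Omega|\ \ge\ |\partial\Omega|-\frac{c}{c'}\bigl(|\partial\Omega|+C'\bigr)=\Bigl(1-\frac{c}{c'}\Bigr)|\partial\Omega|-\frac{cC'}{c'}.
\end{align}
Since $c<c'$, the right side is at least $-cC'/c'$, and $cC'/c'\ge -C'$ because $c/c'<1$ and $C'\ge0$. Taking the infimum over $\Omega$ gives $A^c(K)\ge -cC'/c'\ge -C'$.

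\emph{Perimeter bound.} Let $w\ge1$ with $w=1$ near $\partial K$. Then $|\partial\Omega|_w\ge|\partial\Omega|$, so $F^w_c(\Omega)\ge F_c(\Omega)$. Also $|\partial K|_w=|\partial K|$, since $w=1$ on $\partial K$, so $F^w_c(K)=F_c(K)$. For $K\subseteq\Omega$ with $F^w_c(\Omega)\le F^w_c(K)$ we therefore get
\begin{align}
\Bigl(1-\frac{c}{c'}\Bigr)|\partial\Omega|-\frac{cC'}{c'}\ \le\ F_c(\Omega)\ \le\ F^w_c(\Omega)\ \le\ F^w_c(K)=F_c(K).
\end{align}
Dividing by $1-c/c'>0$ gives $|\partial\Omega|\le\alpha_K$.

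The only delicate point is applying the isoperimetric inequality to $\Omega$. This is only meaningful if $\Omega$ is bounded, since $F^w_c$ is defined for bounded sets; for a bounded finite-perimeter set one approximates by smooth compact domains with converging perimeter and volume. I expect this approximation step to be the main, though minor, obstacle; the rest is algebra.
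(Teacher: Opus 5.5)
Your proof is correct and is essentially the paper's argument: both use $w\ge1$ and $F^w_c(K)=F_c(K)$ together with $|\Omega|\le(|\partial\Omega|+C')/c'$, then rearrange. The one difference is in the first claim, where you substitute the volume bound and get the slightly sharper $A^c(K)\ge -cC'/c'$, while the paper writes $F_c(\Omega)\ge(c'-c)|\Omega|-C'$. There is also a small typo: the inequality you need is $-cC'/c'\ge -C'$, not $cC'/c'\ge -C'$.
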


\begin{proof}
For every compact domain $\Omega$ we have $F_c(\Omega)= |\partial \Omega| - c|\Omega| \geq (c'-c)|\Omega|-C'\ge-C'$, which shows the first claim. 

For the second claim, $w\ge1$ and $F^w_c(K)=F_c(K)$ give $|\partial\Omega|\le F^w_c(\Omega)+c|\Omega|\le F_c(K)+c|\Omega|$. On the other hand since $M$ satisfies the linear isoperimetric inequality with $c'$, $|\Omega|\le(|\partial\Omega|+C')/c'$. Since $c<c'$,
\begin{align}
    |\partial \Omega|\leq c\frac{(|\partial\Omega|+C')}{c'}+F_c(K).
\end{align}
On rearranging, we get the claimed bound.
\end{proof}

\begin{theorem}\label{thm:cconvex}
Let $(M,g)$ be a complete Riemannian manifold of dimension $3\le n\le7$, and let $c>0$. Suppose that $M$ satisfies a linear isoperimetric inequality with constant $c'>c$, and condition ($H^-_\Lambda$) for some $\Lambda>0$. Then there exists a compact exhaustion $\{U_i\}_{i=1}^\infty$ of $M$ with smooth boundaries such that $\int_{\partial U_i}(H-c)^-\to0$.
\end{theorem}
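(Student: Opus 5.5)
The plan is to repeat the argument of Theorem \ref{thm:Hminus}, replacing the area functional with $F_c$. For each compact domain $K$ with smooth boundary and each $\varepsilon\in(0,1]$, I will produce a compact domain $U\supseteq K$ with smooth boundary and $\int_{\partial U}(H-c)^-<\varepsilon$. Iterating over an exhausting sequence $\{K_i\}$, with $\varepsilon/i$ in place of $\varepsilon$, then gives the exhaustion.

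The constants and the comparison domain are chosen as follows.
\begin{itemize}
\item By Lemma \ref{lem:isoperimetric}, $A^c(K)\ge -C'$ is finite. Set $k=\varepsilon/(4(c+1)(1+\alpha_K))$ and $\delta=k\varepsilon/(2(\Lambda+c))$.
\item Choose $R$ with $a^c_K(W)<A^c(K)+\delta$ for all compact domains $W\supset B_R$.
\item Using ($H^-_\Lambda$), take a compact domain $V\supset B_{R+5}$ with $H_{\partial V}\ge-\Lambda$.
\item Put $w=1+k\chi(\tilde\rho)$ with $\tilde\rho$ a smoothing of $d(\cdot,\partial V)$, exactly as in Theorem \ref{thm:Hminus}. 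Thus $w=1+k$ near $\partial V$, $w=1$ near $K$ and on $V'=\{\tilde\rho\ge s\}\supset B_R$, and $|\nabla\log w|\le 2k$.
\end{itemize}
Let $\Omega$ minimise $F^w_c$ among $K\subseteq\Omega\subseteq V$, with boundary $\xi$. Let $\Omega^*$ minimise $F_c$ among $K\subseteq\Omega\subseteq V'$; it lies where $w=1$, so $F^w_c(\Omega^*)=a^c_K(V')<A^c(K)+\delta$. Since $w\ge1$, we have $F^w_c(\Omega)\ge F_c(\Omega)+k|\xi\cap\partial V|\ge A^c(K)+k|\xi\cap\partial V|$. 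Comparing the two gives $|\xi\cap\partial V|<\delta/k$. Moreover $F^w_c(\Omega)\le F^w_c(K)$, so Lemma \ref{lem:isoperimetric} gives $|\xi|\le\alpha_K$, which controls the free part.

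Next, the curvature estimates come from the first variation identities for $F^w_c$. On the free part, $wH=c-\partial_\nu w$. Hence $H\ge c/w-|\nabla w|/w\ge c/(1+k)-2k\ge c-ck-2k$, so $(H-c)^-\le(c+2)k$ there. On the inner contact set $w=1$, so $H\ge c$ and $(H-c)^-=0$. On the outer contact set $H=H_{\partial V}\ge-\Lambda$, so $(H-c)^-\le\Lambda+c$. Summing,
\begin{align}
\int_\xi (H-c)^-\le (c+2)k\,\alpha_K+(\Lambda+c)\,\delta/k<\varepsilon,
\end{align}
after adjusting the numerical factor in $k$ if needed. A short mean curvature flow, as in the smoothing paragraph, yields a smooth hypersurface with the same bound. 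It bounds a compact domain containing any prescribed compact subset of $\mathring K$.

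I expect the main obstacle to be bounding the free part. Unlike Theorem \ref{thm:Hminus}, the functional is not positive, so $|\xi|$ is not controlled trivially, and the volume term could in principle push $\xi$ far out. This is exactly where the linear isoperimetric inequality with $c'>c$ enters, through the bound $|\partial\Omega|\le\alpha_K$ of Lemma \ref{lem:isoperimetric} and the finiteness of $A^c(K)$ needed to choose $R$. A second point to check is that the $C^{1,1}$ minimiser exists within the band with the stated contact inequalities, and that the minimal set genuinely contains $K$ with its boundary separating $\partial K$ from $\partial V$. Both follow as in the earlier sections, because the volume term is a lower order perturbation.
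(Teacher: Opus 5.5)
Your proposal is correct and follows essentially the same route as the paper. The shared steps are the weight $w=1+k\chi(\tilde\rho)$ penalising the outer wall, comparison with the unweighted $F_c$-minimiser in $V'$ to get $|\xi\cap\partial V|<\delta/k$, Lemma \ref{lem:isoperimetric} for $|\xi|\le\alpha_K$, and the three-part estimate of $(H-c)^-$. Your choice $k=\varepsilon/(4(c+1)(1+\alpha_K))$ needs no adjustment: $(c+2)/(c+1)<2$ gives $(c+2)k\alpha_K<\varepsilon/2$, and $(\Lambda+c)|\xi\cap\partial V|<\varepsilon/2$.
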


\begin{proof}
The proof follows the idea of Theorem \ref{thm:Hminus}. It suffices for us to show that for every compact domain $K$ with smooth boundary and every $\varepsilon\in(0,1]$, there is a compact domain $U$ with smooth boundary, containing any prescribed compact subset of $\mathring K$, such that $\int_{\partial U}(H-c)^-<\varepsilon$. With $\alpha_K$ as in Lemma \ref{lem:isoperimetric}, set
\begin{align}
k=\frac{\varepsilon}{2(2+c)(1+\alpha_K)},
\end{align}
and
\begin{align}
\delta=\frac{k\varepsilon}{2(\Lambda+c)} .
\end{align}
Fix a point $p$ in $K$. Since $A^c(K)$ is finite, we can choose $R$ with $K\subset B_R$ and $a^c_K(W)<A^c(K)+\delta$ for every compact domain $W\supset B_R$. By ($H^-_\Lambda$), choose a compact domain $V\supset B_{R+5}$ with smooth boundary and $H_{\partial V}\ge-\Lambda$. Define a function $\tilde \rho$ with $|\tilde\rho-d(\cdot,\partial V)|<1$ and $|\nabla\tilde\rho|\le2$, and define a cut-off function $\chi$ exactly as in Theorem \ref{thm:Hminus}. Again, we define the weight $w$ as $w=1+k\chi(\tilde\rho)$. Then $1\le w\le1+k$, $w=1+k$ near $\partial V$, $w=1$ at distance at least $4$ from $\partial V$ (in particular near $K$), and $|\nabla\log w|\le2k$. Let $\tilde\Omega$ minimise $F^w_c$ over $K\subseteq\Omega\subseteq V$, and let $\xi$ be its boundary. Since $K$ is admissible, $F^w_c(\tilde\Omega)\le F_c(K)$, so $|\xi|\le \alpha_K$ by Lemma \ref{lem:isoperimetric}.

Let $V'=\{x\in V:\tilde\rho(x)\ge s\}$ for a regular value $s\in(3,4)$ of $\tilde\rho$. Then $B_R\subset V'$ and $w=1$ on $V'$. Let $\Omega^*$ minimise $F_c$ over $K\subseteq\Omega\subseteq V'$. Then $\Omega^*$ is admissible in the weighted problem, and its boundary lies where $w=1$, so $F^w_c(\Omega^*)=a^c_K(V')<A^c(K)+\delta$. On the other hand, $w\ge1$, $w=1+k$ on $\partial V$, and the volume terms of $F^w_c$ and $F_c$ coincide. So,
\begin{align}
A^c(K)+k\,|\xi\cap\partial V|\ \le\ F_c(\tilde\Omega)+k\,|\xi\cap\partial V|\ \le\ F^w_c(\tilde\Omega)\ \le\ F^w_c(\Omega^*)\ <\ A^c(K)+\delta ,
\end{align}
and so $|\xi\cap\partial V|<\delta/k=\varepsilon/(2(\Lambda+c))$.

On the free part of $\xi$ we have $H=c/w-\partial_\nu\log w$, and since $|1/w-1|\le k$, this gives $|H-c|\le(2+c)k$. On $\xi\cap\partial K$, $w$ is constant nearby, so $H\ge c$. On $\xi\cap\partial V$, $H=H_{\partial V}\ge-\Lambda$, so $(H-c)^-\le\Lambda+c$. Hence
\begin{align}
\int_\xi(H-c)^-\ \le\ (2+c)k\,\alpha_K+(\Lambda+c)\,|\xi\cap\partial V|\ <\ \frac{\varepsilon \alpha_K}{2(1+\alpha_K)}+\frac\varepsilon2\ <\ \varepsilon .
\end{align}
Smoothing $\xi$ gives a smooth hypersurface with $\int(H-c)^-<\varepsilon$, which bounds a compact domain containing any prescribed compact subset of $\mathring K$.
\end{proof}

\subsection{Proof of Corollary \ref{cor:cconvex}}
If $M$ is non-collapsed, ($H^-_\Lambda$) holds for every $\Lambda>0$ by Proposition \ref{prop:noncollapsed}. If $M$ has a single end and $\kappa<\infty$, by Proposition \ref{prop:growing}, it holds for every $\Lambda>(n-1)\kappa$. The hypothesis $A_\infty>0$ of that proposition is satisfied in this case because of the isoperimetric inequality, since for $\Omega\supset B_r$, we have $|\partial\Omega|\ge c'|B_r|-C'\to\infty$ (since $|B_r|\to \infty$). Now we apply Theorem \ref{thm:cconvex}.
\qed

\begin{remark}
For $c\le0$ the statement follows from the case $c=0$, since $(H-c)^-\le H^-$. The analogous statement for $(H-c)^+$ would require penalising the inner obstacle, and hence a finite limit of $A^c(B_r)$. Under a linear isoperimetric inequality with constant $c'>c$ this limit is infinite if the manifold has infinite volume, since $A^c(B_r)\ge(c'-c)|B_r|-C'$. For $c>h$, however, Proposition \ref{prop:decaying} already gives exhaustions with $H<c$ everywhere. A positive Cheeger constant gives the hypothesis with $C'=0$. In $\mathbb H^n$ every $c<n-1$ is allowed, although there large spheres already have $H>n-1$. We do not know whether exhaustions with $H\ge c$ everywhere exist for $c\in(0,c')$, or whether the constant $c'$ is sharp.
\end{remark}

\end{document}